\documentclass[a4paper,12pt,reqno]{amsart}
\usepackage{}
\usepackage{amssymb}
\usepackage{mathrsfs,amsmath,amssymb,latexsym,amsfonts, amscd}


\title{Characterization of the 4-canonical birationality of algebraic 3-folds, II}
\author{Meng Chen and Qi Zhang}

\address{\rm School of Mathematical Sciences \& LMNS, Fudan University, Shanghai 200433, China}
\email{mchen@fudan.edu.cn}

\address{\rm Department of Mathematics, University of Missouri, Columbia, MO 65211, USA}
\email{qi@math.missouri.edu}

\thanks{Supported by National Natural Science Foundation of China (grants: \#11171068, \#11231003, \#11421061)}


\newcommand{\bQ}{{\mathbb Q}}
\newcommand{\bP}{{\mathbb P}}
\newcommand{\roundup}[1]{\lceil{#1}\rceil}
\newcommand{\rounddown}[1]{\lfloor{#1}\rfloor}

\newcommand\lrw{\longrightarrow}
\newcommand\rw{\rightarrow}

\newcommand\OO{{\mathcal{O}}}
\newcommand\OX{{\mathcal{O}_X}}

\newcommand\bF{{\mathbb{F}}}

\newcommand{\lsgeq}{\succcurlyeq}
\newcommand{\lsleq}{\preccurlyeq}
\newcommand{\hatE}{\hat{E}}
\newcommand{\hatF}{\hat{F}}

\newtheorem{thm}{Theorem}[section]
\newtheorem{lem}[thm]{Lemma}
\newtheorem{cor}[thm]{Corollary}
\newtheorem{prop}[thm]{Proposition}

\newtheorem{op}[thm]{Problem}
\theoremstyle{definition}
\newtheorem{defn}[thm]{Definition}

\newtheorem{exmp}[thm]{Example}

\newtheorem{propty}[thm]{Property}
\newtheorem{rem}[thm]{Remark}
\theoremstyle{remark}

\begin{document}
\begin{abstract} For nonsingular projective 3-folds $X$ (of general type) whose geometric genus  $p_g=h^0(X, K_X)$ is $\geq 5$, the birationality of the fourth canonical map $\varphi_{4,X}=\Phi_{|4K_X|}$ was characterized by D.-Q. Zhang and the first author in 2008. This paper aims at characterizing the birationality of $\varphi_{4,X}$  for those $X$ with $p_g=4$.
\end{abstract}
\maketitle

\pagestyle{myheadings}
\markboth{\hfill M. Chen and Q. Zhang\hfill}{\hfill Characterization of the 4-canonical birationality\hfill}
\numberwithin{equation}{section}
\section{\bf Introduction}
We work over any algebraically closed field $k$ of characteristic 0. Studying pluricanonical maps has been an important way of understanding the birational geometry of projective varieties.  Denote by $\varphi_{m,X}$ (or, in short, $\varphi_m$) the pluricanonical map of a given variety $X$ of dimension $n$.  A remarkable theorem of Hacon and McKernan \cite{H-M}, Takayama \cite{Ta} and Tsuji \cite{Tsuji} shows that there exists a constant $c(n)$ so that $\varphi_{m,X}$ is birational for all $m\geq c(n)$ and for any $n$-fold $X$ of general type. 
It is known that one can take $c(1)=3$, $c(2)=5$ (see Table 1 below), and $c(3)=61$ (see Table 2 below). No explicit value for $c(n)$ is known for $n \ge 4$.

Let us first recall the results of Bombieri \cite{Bom} for minimal surfaces $S$ of general type, where $p_g=p_g(S)$ and $K^2=K_S^2$:
\medskip

\centerline{\underline{Table 1}}
\medskip

\begin{center}
{
\begin{tabular}{l|l}
 \hline\hline
$p_g\geq 4$&
$\varphi_3$ and $\varphi_4$ are both birational; \\
\hline
$p_g=3$ & $\varphi_4$ is birational;\\
&$\varphi_3$ is birational if and only if $K^2\neq 2$;\\
\hline
$p_g=2$& $\varphi_4$ is birational if and only if $K^2\neq 1$;\\
\hline
$p_g=0,1$ & $\varphi_3$ and $\varphi_4$ are both birational;\\
\hline
Any S & $\varphi_m$ is birational for all $m\geq 5$.\\
\hline\hline
\end{tabular}}\end{center}
\medskip

The known behavior of $\phi_m$ for minimal 3-folds $X$ of
general type can be summarized by the following table.

\centerline{\underline{Table 2}}

\begin{center}
{
\begin{tabular}{l|l}
 \hline\hline
$p_g\geq 4$&
$\varphi_5$ is birational (\cite[Theorem 1.2(2)]{IJM}); $\exists$ examples s.t. \\
&$\varphi_4$ is not birational (\cite[Example 1.4]{IJM} and Example \ref{ex}).\\
\hline
$p_g=3$& $\varphi_6$ is birational (\cite[Theorem 1.2(1)]{IJM}); $\exists$  examples s.t.\\
& $\varphi_5$ is not birational (\cite{C-G} or
\cite[p.\ 151, No.7]{Iano}). \\
\hline
$p_g=2$ & $\varphi_8$ is birational (\cite[Section 4]{IJM});\\
& $\exists$ examples s.t. $\varphi_7$ is not birational (\cite[p.\ 151, No.12]{Iano}).\\
\hline
$p_g=1$ & $\exists$ examples s.t. $\varphi_{13}$ is not birational (\cite[p.\ 151, No.19]{Iano}).\\
\hline
$p_g=0$ & $\exists$ examples s.t. $\varphi_{26}$ is not birational (\cite[p.\ 151, No.23]{Iano}).\\
\hline
Any X & $\varphi_m$ is birational for $m\geq 61$ (\cite{EXP3}).\\
\hline\hline
\end{tabular}}\end{center}
\medskip

A natural question arising from Table 2 is whether it is possible to characterize the birationality of $\varphi_m$ for small $m$. In 2008, D.-Q. Zhang and the first author proved the following theorem in that direction.
\medskip

\noindent{\bf Theorem 0}. (\cite[Theorem 1.3]{MZ})
{\em Let X be a minimal projective 3-fold of general type (admitting at worst canonical singularities) with geometric genus $p_g(X)\geq 5$. Then:
\begin{itemize}
\item[(1)]	$\varphi_4$ is {\bf not} birational if and only if X is birationally fibered by a family $\mathfrak{C}$ of irreducible
curves of geometric genus 2 with $(K_X\cdot C_0)=1$  for a general member $C_0\in \mathfrak{C}$.
\item[(2)] In (1) the family $\mathfrak{C}$ is birationally uniquely determined by the given 3-fold X.
\end{itemize}}


The main purpose of this paper is to characterize the birationality of $\varphi_4$ in the ``next'' case $p_g=4$.

\begin{thm}\label{m1} Let $X$ be a minimal projective 3-fold of general type with $p_g(X)=4$. Then $\varphi_4$ is {\bf not} birational if and only if $X$ has one of the following structures, where some terms of the statement are defined in ``Convention'' following Property 1.2:
\begin{itemize}
\item[(1)] $K_X^3=2$ and the canonical map $\varphi_1$ is a generically double cover onto $\bP^3$.
\item[(2)] $X$ has a genus-$2$ curve family ${\mathfrak C}$ of canonical degree $1$, i.e.  $(K_X\cdot C_0)=1$ for a general element $C_0\in {\mathfrak C}$.
\item[(3)] $X$ is canonically fibered by genus-$2$ curve family $\mathfrak{C}$ of canonical degree $6/5$ over some cubic surface in $\bP^3$.
\item[(4)] $X$ is canonically fibered by genus-$2$ curve family $\mathfrak{C}$ of canonical degree $4/3$ over the quadric cone  $\bar{\mathbb F}_2\subset \bP^3$ and the sub-family  $\overline{\Phi_{K_X}^{-1}(l)}\rightarrow l$ of $\mathfrak{C}$  satisfies Property \ref{H-I}, where $l$ denotes a general line in $\bar{\mathbb F}_2$ passing through the vertex of $\bar{\mathbb F}_2$.
\end{itemize}
The curve families ${\mathfrak C}$ in Items (2), (3) and (4)  are birationally uniquely determined by $X$.
\end{thm}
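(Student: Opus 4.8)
\emph{Overall strategy.} The plan is to prove the two implications of the characterization and then the uniqueness statement. Fix once and for all a birational morphism $\pi\colon X'\to X$ from a smooth projective $3$-fold on which the moving part $M_1$ of $|\pi^*K_X|$ is base-point-free, and write $\pi^*K_X\equiv_{\bQ}M_1+Z_1$ with $Z_1\geq0$ the fixed $\bQ$-divisor; let $g=\varphi_1\circ\pi\colon X'\to\Sigma\subseteq\bP^3$ be the induced morphism, so that $\Sigma$ is non-degenerate and $\dim\Sigma\in\{1,2,3\}$. The whole argument is organized by this dimension. The engine throughout is the standard pluricanonical method (cf.\ \cite{MZ} and the references in Table~2): to test whether $\varphi_4=\Phi_{|4\pi^*K_X|}$ is birational one writes $4\pi^*K_X\sim K_{X'}+\lceil 3\pi^*K_X-\Delta\rceil$ for a suitable effective $\bQ$-divisor $\Delta$, restricts successively to a general surface $S\in|M_1|$ and then to a general curve $C_0$ on $S$ (a fibre of the canonical curve fibration when $\dim\Sigma=2$), controls the restriction maps by Kawamata--Viehweg vanishing, and concludes by Riemann--Roch/Clifford estimates on $C_0$. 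The exceptional structures (2)--(4) are exactly the ``bottom'' families $\{C_0\}$ for which these estimates degenerate, while (1) is the separate phenomenon of $\varphi_4$ factoring through a degree-$2$ cover of $\bP^3$.

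\emph{Sufficiency.} In case (1), pass to a model on which $\varphi_1$ is a morphism $f\colon X'\to\bP^3$ with covering involution $\tau$; using $K_X^3=2$ one gets $M_1^3=2$, $Z_1=0$, and $\omega_{X'}^{\ot 4}=f^*\OO_{\bP^3}(4)$, so that the $\tau$-anti-invariant direct summand of $f_*\omega_{X'}^{\ot 4}$ has no global sections. Hence every member of $H^0(4K_X)$ is pulled back from $\bP^3$, $\varphi_4$ factors through $f$, and $\varphi_4$ is not birational. In cases (2)--(4) one restricts $|4\pi^*K_X|$ to a general member $C_0$ of the genus-$2$ family $\mathfrak C$: the canonical-degree hypothesis gives $(4\pi^*K_X\cdot C_0)=4\delta$ with $\delta\in\{1,\tfrac65,\tfrac43\}$, so in (2) and (3) this is $<2g(C_0)+1=5$ and Clifford's inequality on the hyperelliptic curve $C_0$ forces the restricted system to be composed with the $g^1_2$ on $C_0$, whereas in (4), where $\delta=\tfrac43$ and the degree is $\tfrac{16}{3}>5$, the finer Property~\ref{H-I} on the sub-pencil over a ruling line is precisely what is needed to reach the same conclusion. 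Since the $C_0$ sweep out $X$, $\varphi_4$ then cannot separate the two points of a general hyperelliptic fibre of $C_0$, so it is not birational.

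\emph{Necessity.} Suppose $\varphi_4$ is not birational. If $\dim\Sigma=3$, then $\varphi_1$ is generically finite of some degree $e$, and since $\varphi_1$ is the composite of $\varphi_4$ with the linear projection coming from the $4$th Veronese embedding, $\varphi_1$ birational would force $\varphi_4$ birational; hence $e\geq2$, and the relation $M_1^3=e$ together with the known lower bounds for $K_X^3$ and the eigensheaf computation above forces $e=2$, $K_X^3=2$, i.e.\ case (1). If $\dim\Sigma=1$, then $X$ is canonically fibred by surfaces over a non-degenerate curve $B\subset\bP^3$ of degree $\geq3$; then $|4K_X|$ separates distinct general fibres (already $|M_1|$ does, the canonical map being birational onto $B$), and on a general fibre surface $F$ one has $4\pi^*K_X|_F\geq\sigma^*(4K_{F_0})$, where $\sigma\colon F\to F_0$ is the contraction to the minimal model, which is birational by Table~1 unless $F_0$ has $p_g=2$, $K_{F_0}^2=1$ and $\pi^*K_X|_F\equiv\sigma^*K_{F_0}$ --- and in that exceptional situation the members of $|K_{F_0}|$, as $F$ varies over the fibres, form a covering genus-$2$ curve family of canonical degree $1$, so $X$ is as in (2). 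The bulk of the proof is the case $\dim\Sigma=2$: put $d=\deg\Sigma$ and let $C_0$ be a general fibre of the canonical curve fibration, of genus $g_0$ and canonical degree $\delta=(K_X\cdot C_0)$. Adjunction along the fibre gives $2g_0-2=K_{X'}\cdot C_0\geq(\pi^*K_X\cdot C_0)=\delta>0$ (positive since $K_X$ is big and the $C_0$ move), so $g_0\geq2$; a careful case analysis of the restriction-and-vanishing machinery then shows that $\varphi_4$ is birational unless $g_0=2$ and $\delta\in\{1,\tfrac65,\tfrac43\}$, the discreteness reflecting the need, when $\delta$ is larger, for $(4\pi^*K_X\cdot C_0)$ to clear the Clifford threshold with enough room for a general remainder divisor on $C_0$. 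A final numerical analysis of the fibration --- bounding $K_X^3$ below in terms of $d$ and $\delta$ and matching against the constraints imposed by $p_g=4$ --- identifies the geometry: $\delta=1$ gives (2), $\delta=\tfrac65$ forces $d=3$ (case (3)), and $\delta=\tfrac43$ forces $\Sigma=\bar{\mathbb F}_2$ together with Property~\ref{H-I} (case (4)).

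\emph{Uniqueness, and the main difficulty.} In cases (3) and (4) the family $\mathfrak C$ consists of the fibres of the intrinsically defined canonical map, so uniqueness is automatic; in case (2) one argues as for Theorem~0(2) of \cite{MZ}, recovering $\mathfrak C$ from $X$ as the unique covering family of curves realizing the minimal canonical degree compatible with geometric genus $2$ --- the existence of two distinct such families being incompatible with the nefness and bigness of $K_X$ via an intersection count on a general surface through a general point. I expect the main obstacle to be the analysis of the $\dim\Sigma=2$, $g_0=2$ case, and in particular the borderline value $\delta=\tfrac43$: there $(4\pi^*K_X\cdot C_0)$ can equal $2g_0+1$, so the crude degree bounds no longer decide birationality and one must carry out the detailed study of the sub-pencil over a general ruling line of $\bar{\mathbb F}_2$ that is packaged as Property~\ref{H-I}. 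Keeping every error term in the $\bQ$-divisor inequalities sharp enough to land on the exact list $\{1,\tfrac65,\tfrac43\}$, rather than a weaker inequality for $\delta$, is the delicate part of the whole argument.
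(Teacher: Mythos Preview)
Your overall architecture --- split by $d_1=\dim\Sigma\in\{1,2,3\}$ and run the restrict-and-vanish machinery of Theorem~\ref{key} --- is exactly the paper's. The necessity sketch is broadly compatible with the paper's Lemmas~\ref{23}--\ref{Y}, Corollary~\ref{d1}, Theorem~\ref{d=2}, and Lemmas~\ref{1-1}--\ref{1-2}, though several of your one-line reductions (e.g.\ ``$4\pi^*K_X|_F\geq\sigma^*(4K_{F_0})$'' in the $d_1=1$ case, or the eigensheaf step forcing $e=2$ in the $d_1=3$ case) hide the actual work: the paper needs Kawamata's extension (Corollary~\ref{ie}), a recursive $\xi$-estimate for $(2,3)$-surfaces (Lemma~\ref{23}), and a Hodge-index argument (Lemma~\ref{1-1}).

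The genuine gap is in your \emph{sufficiency} argument for structures~(1)--(3). For (2) and (3) you assert that since $\deg\bigl(4\pi^*(K_X)|_{C}\bigr)=4\delta<5=2g(C)+1$, ``Clifford's inequality on the hyperelliptic curve $C$ forces the restricted system to be composed with the $g^1_2$.'' This is false. On a genus-$2$ curve a complete degree-$4$ linear series $|D|$ is nonspecial, so Clifford says nothing; and $\Phi_{|D|}$ is birational onto a plane quartic whenever $D\not\sim 2K_C$. Thus knowing only that $|M_4|\big|_{C}$ has degree $\le 4$ does \emph{not} preclude birationality. The paper's Proposition~\ref{anti} does much more: after a careful factoring of $\pi$ it shows (Lemma~\ref{nnn}) that the support of $(E_\pi|_S)|_C$ consists of one or two points and that the remainder divisor $\tilde D$ in $|M_4|\big|_C=|K_C+\tilde D|$ is \emph{literally} $\sim K_C$, whence $\varphi_4|_C=\Phi_{|2K_C|}$ is the hyperelliptic double cover. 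That identification $\tilde D\sim K_C$, not a degree bound, is the crux. Similarly for (1): your claim ``$Z_1=0$ and $\omega_{X'}^{\otimes 4}=f^*\OO_{\bP^3}(4)$'' presumes $X$ is itself the smooth double cover, which need not hold for a minimal terminal $3$-fold with $K_X^3=2$; the paper (Lemma~\ref{1-2}) instead restricts to the hyperelliptic curve $C=S\cap S'$ and shows $|M_4|\big|_C=|K_C+S|_C|$ with $S|_C$ a $g^1_2$. For (4) you correctly flag that Property~\ref{H-I} is the extra input, but the mechanism is not a degree bound either: the paper (Proposition~\ref{2anti}) uses horizontal integrality to force $M_4|_{\hat F}\le 2H_1+(\text{vertical})$, so that $M_4|_C\sim 2H_1|_C$ with $H_1|_C$ the $g^1_2$. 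In short, the ``if'' direction is where the real difficulty lies, and your Clifford shortcut does not close it.
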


\begin{propty}\label{H-I} The genus two curve family $\overline{\Phi_{K_X}^{-1}(l)}\rightarrow l$ is birationally equivalent to a fibration $\iota:F\rw \bP^1$, namely, there is a birational morphism $F\rightarrow \overline{\Phi_{K_X}^{-1}(l)}$ and $\iota$ factors through  $\overline{\Phi_{K_X}^{-1}(l)}$ where $F$ is a nonsingular projective surface. Let $C$ be a general fiber of $\iota$. Then $h^0(F, K_F-C)=1$ and the horizontal part of $|K_F-C|$ is irreducible and reduced.
\end{propty}

\noindent{\bf Convention}. Here are the definition of some frequently used terms in this paper.
\begin{itemize}
\item[$\diamond$] We say that {\it $F$ is $C$-horizontally (or $\iota$-horizontally) integral} if $\iota:F\rightarrow \bP^1$ satisfies Property \ref{H-I}. Sometimes we abuse this concept  to any birational model of $F$ and simply say, for example, that $\overline{\Phi_{K_X}^{-1}(l)}$ is {\it $C$-horizontally integral}.

\item[$\diamond$] Usually ${\mathbb F}_2$ denotes the Hirzebruch ruled surface with the unique $(-2)$-curve section.  We denote by $\bar{\mathbb F}_2$ the cone obtained by contracting the $(-2)$-curve section on ${\mathbb F}_2$. Denote by $l$ a general line in $\bar{\mathbb F}_2$ passing through the vertex.

\item[$\diamond$] We say that a smooth projective surface $F$ (of general type) {\it is an ``$(j_1,j_2)$ surface}'' if $K_{F_0}^2=j_1$ and $p_g(F_0)=j_2$ where $F_0$ is the minimal model of $F$.

\item[$\diamond$]  A smooth projective surface $F$ is said to {\it be canonically fibered by curves} if $|K_F|$ is composed of a pencil of curves.
\end{itemize}

A direct consequence of Theorem \ref{m1} is the following corollary.

\begin{cor}\label{cr}  Let $X$ be a minimal projective 3-fold of general type with $p_g(X)=4$. Then $\varphi_4$ is either birational or generically finite of degree 2.
\end{cor}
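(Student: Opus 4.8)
The plan is to assume that $\varphi_4$ is not birational and to deduce $\deg\varphi_4=2$; since the birational case trivially counts as ``generically finite of degree $1$'', this proves the corollary. By Theorem \ref{m1}, under this assumption $X$ is of one of the types (1)--(4), and I would handle Case (1) separately from Cases (2)--(4).

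\textbf{Case (1).} Since $\varphi_1$ is generically $2$-to-$1$ onto $\bP^3$, its image is all of $\bP^3$, so no quartic hypersurface of $\bP^3$ contains it and the multiplication map $\mathrm{Sym}^4 H^0(X,K_X)\to H^0(X,4K_X)$ is injective, with $35$-dimensional image $W$. Choosing a basis of $H^0(X,K_X)$ identifies $\Phi_{|W|}$ with $\nu_4\circ\varphi_1$, where $\nu_4\colon\bP^3\hookrightarrow\bP^{34}$ is the $4$-uple Veronese embedding, so $\Phi_{|W|}$ is generically $2$-to-$1$ onto the $3$-fold $\nu_4(\bP^3)$. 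As $W\subseteq H^0(X,4K_X)$, there is a linear projection $\rho$ of projective spaces with $\rho\circ\varphi_4=\Phi_{|W|}$; hence $\overline{\varphi_4(X)}$ dominates $\nu_4(\bP^3)$, is therefore $3$-dimensional, and $\varphi_4$ is generically finite. Comparing degrees in $\rho\circ\varphi_4=\Phi_{|W|}$ gives $\deg\varphi_4\mid 2$, and as $\varphi_4$ is not birational, $\deg\varphi_4=2$.

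\textbf{Cases (2)--(4).} In each of these $X$ carries a genus-$2$ curve family $\mathfrak C$ (of canonical degree $1$, $6/5$, or $4/3$) such that, by the uniqueness part of Theorem \ref{m1}, a general point of $X$ lies on a unique member of $\mathfrak C$; after a birational modification resolving $|4K_X|$ and $\mathfrak C$, a general member may be taken to be a smooth curve $\tilde C_0$ of genus $2$. The two facts I would extract from the proof of Theorem \ref{m1} are: (a) the obstruction to birationality of $\varphi_4$ is that the restriction of the moving part of $|4K_X|$ to a general $\tilde C_0$ is composed with the hyperelliptic pencil $g^1_2$ of $\tilde C_0$; and (b) $\varphi_4$ separates two general points lying on distinct general members of $\mathfrak C$. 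From (a), $\varphi_4|_{\tilde C_0}$ is exactly $2$-to-$1$ onto a rational curve, so $\overline{\varphi_4(X)}$ is swept out by a $2$-dimensional family of curves, hence is $3$-dimensional and $\varphi_4$ is generically finite; from (b), for a general $x\in X$ the whole fibre $\varphi_4^{-1}(\varphi_4(x))$ lies on the unique member of $\mathfrak C$ through $x$. Combining, $\deg\varphi_4 = 2\cdot 1 = 2$.

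Together, the two cases give $\deg\varphi_4=2$ whenever $\varphi_4$ is not birational, which is the assertion. The step that genuinely requires revisiting the \emph{proof} of Theorem \ref{m1} rather than just its statement is item (a): a non-constant non-birational morphism from a genus-$2$ curve need not be $2$-to-$1$ (for instance it could be $3$-to-$1$ onto an elliptic image), so one needs the sharper information that the offending restricted linear system on $\tilde C_0$ is composed with the hyperelliptic $g^1_2$. Once (a) and the transversal separation (b) are in hand, the degree count is immediate, and I expect this bookkeeping to be the only real work.
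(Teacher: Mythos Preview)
Your proposal is correct and tracks the paper's own (implicit) argument closely: the paper gives no standalone proof of the corollary, but the degree-$2$ conclusion is established piecewise in Lemma~\ref{1-2} for case~(1), in Lemmas~\ref{deg2} and~\ref{nnn} for cases~(2) and~(3), and in Proposition~\ref{2anti} for case~(4), exactly by verifying your facts (a) and (b) --- that $|M_4||_C$ restricts to $|2K_C|$ on the genus-$2$ curve and that $\varphi_4$ separates the curves.

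The one genuine difference is your treatment of case~(1). The paper's Lemma~\ref{1-2} argues on the restriction to a general curve $C\subset S$: it shows $|M_4||_C=|K_C+S|_C|$ with $S|_C$ the hyperelliptic $g^1_2$, hence a double cover. Your Veronese-factorization argument ($\Phi_{|W|}=\nu_4\circ\varphi_1$, then $\deg\varphi_4\mid\deg\Phi_{|W|}=2$) is shorter and avoids any curve-level computation; it uses only that $\varphi_1$ is a generically finite degree-$2$ cover of $\bP^3$. Both are valid; yours is more conceptual, while the paper's fits its uniform ``restrict to $C$'' framework used throughout. One small point worth noting in your write-up: you should check that the projection center of $\rho$ does not contain $\overline{\varphi_4(X)}$, but this is automatic since $\rho\circ\varphi_4=\Phi_{|W|}$ is dominant onto $\nu_4(\bP^3)$.
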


\begin{exmp}\label{ex} (1) The general hypersurface $X=X_{10}\subset \bP(1,1,1,1,5)$ is a smooth canonical 3-fold with $p_g=4$ and $K_X^3=2$. Clearly $\varphi_{1,X}$ is a finite morphism of degree 2 onto $\bP^3$ and $\varphi_{4,X}$ is a double cover.

(2) For any projective $\bQ$-factorial terminal 3-fold $X$, which is birationally fibered by $(1,2)$ surfaces, $X$ has a natural curve family of canonical degree 1.  Clearly $\varphi_{4,X}$ is not birational by Bombieri's theorem.
\end{exmp}

\begin{rem} (1) It is not known whether items (3) and (4) in Theorem \ref{m1} actually occur.

(2) If a smooth projective surface $F$ is fibered by curves $C$ of genus $2$ and $F$ is $C$-horizontally integral, it is easy to see that either $p_g(F)=2$ or $p_g(F)=3$ and $|K_F|$ is not composed of a pencil of curves.

(3) It is unclear to the authors whether a minimal surface $S$ satisfying $K_S^2=2$ and $p_g(S)=3$ may admit a free pencil of curves of genus $2$.

(4) Item (4) in Theorem \ref{m1} suggests that some 3-folds which are fibered by $(2,3)$-surfaces may have non-birational 4-canonical maps. Of course, it is clear that these 3-folds have non-birational 3-canonical maps by Bombieri theorem.
\end{rem}

Throughout we will use the following symbols:
\begin{itemize}
\item[$\diamond$] ``$\sim$'' denotes linear equivalence or ${\mathbb Q}$-linear equivalence;
\item[$\diamond$] ``$\equiv$'' denotes numerical equivalence;
\item[$\diamond$] ``$|M_1|\lsgeq |M_2|$'' (or, equivalently,  ``$|M_2|\lsleq |M_1|$'') means, for linear systems $|M_1|$ and $|M_2|$ on a variety,
$$|M_1|\supseteq|M_2|+\text{(fixed effective divisor)}.$$
\item[$\diamond$] ``$D\leq D'$'' means that $D'-D$ is linearly (or ${\mathbb Q}$-linearly) equivalent to an effective divisor (or effective $\bQ$-divisor)  subject to the context for two divisors (or $\bQ$-divisors) $D$ and $D'$.
\end{itemize}

\section{\bf Preliminaries}
Throughout $X$ will be a minimal projective 3-fold of general type (with at worst ${\bQ}$-factorial terminal singularities) on which $\omega_X=\OX(K_X)$ is the canonical sheaf and $K_X$ a canonical divisor.

\subsection{Set up}\label{setup}
We assume $p_g(X)=h^0(X, \omega_X)\geq 2$. So we may study the birational structure of $X$ by considering the canonical map $$\varphi_{1}: X\dashrightarrow \bP^{p_g-1}$$ which is a non-constant rational map. 

{}From the very beginning we fix an effective Weil divisor $K_1\sim K_X$. Take successive blow-ups $\pi: X'\rightarrow X$, which exists by Hironaka's big theorem, such that:
\smallskip

(i) $X'$ is nonsingular and projective;

(ii) the moving part of $|K_{X'}|$ is base point free;

(iii) the union of supports of both $\pi^*(K_1)$ and exceptional divisors of $\pi$ is simple normal crossing.
\smallskip

Denote by $\tilde{g}$ the composition $\varphi_1\circ\pi$. So $\tilde{g}:
X'\rightarrow \Sigma\subseteq{\mathbb P}^{p_g(X)-1}$ is a morphism by the above assumption.
Let $X'\overset{f}\rightarrow \Gamma\overset{s}\rightarrow \Sigma$ be
the Stein factorization of $\tilde{g}$. We get the following commutative diagram:
\medskip

\begin{picture}(50,80) \put(100,0){$X$} \put(100,60){$X'$}
\put(170,0){$\Sigma$} \put(170,60){$\Gamma$} \put(115,65){\vector(1,0){53}}
\put(106,55){\vector(0,-1){41}} \put(175,55){\vector(0,-1){43}}
\put(114,58){\vector(1,-1){49}} \multiput(112,2.6)(5,0){11}{-}
\put(162,5){\vector(1,0){4}} \put(133,70){$f$} \put(180,30){$s$}
\put(92,30){$\pi$} \put(132,-6){$\varphi_1$}\put(136,40){$\tilde{g}$}
\end{picture}
\bigskip

We may write $K_{X'}=\pi^*(K_X)+E_{\pi}\sim M_1+Z_1,$ where $|M_1|$ is the moving part of $|K_{X'}|$, $Z_1$ the fixed part and
$E_{\pi}$ an effective ${\bQ}$-divisor which is a sum of distinct exceptional divisors with positive rational coefficients.
Since $h^0(X', \OO_{X'}(M_1))=h^0(\omega_X)$, we may also write $\pi^*(K_X)\sim_{\mathbb Q}
M_1+E_1'$ where $E_1'=Z_1-E_{\pi}$ is an effective ${\mathbb Q}$-divisor. Set $d_1=\dim\overline{\varphi_1(X)}=\dim(\Gamma)$. Clearly one has $1\leq d_1\leq 3$.

If $d_1=2$, a general fiber of $f$ is a smooth
projective curve of genus $\geq 2$. We say that $X$ is {\it canonically fibred by curves}.

If $d_1=1$, a general fiber $F$ of $f$ is a smooth
projective surface of general type. We say that $X$ is {\it
canonically fibred by surfaces} with invariants $(c_1^2(F_0), p_g(F_0)),$ where $F_0$ is the minimal model of $F$ via the contraction morphism $\sigma: F\rightarrow F_0$. We may write $M\equiv p_1 F$ where $p_1=\deg f_*\OO_{X'}(M_1)\ge p_g(X)-1$. Denote $b=g(\Gamma)$.

Just to fix the convention, {\it a generic irreducible element $S$ of} $|M_1|$ means either a general member of $|M_1|$ in the case of $d_1\geq 2$ or, otherwise, a general fiber $F$ of $f$.

For any integer $m>0$, $|M_m|$ denotes the moving part of $|mK_{X'}|$. Let $S_m$ be a general member of $|M_m|$ whenever $m>1$. Set
$$p=\begin{cases}
1, &\text{if}\ d_1\geq 2;\\
p_1, & \text{if}\ d_1=1.
\end{cases}$$
We always have
$$\pi^*(K_X)\equiv pS+E_1'$$
for the effective $\bQ$-divisor $E_1'$ on $X'$.

\subsection{\bf Convention.} For any linear system $|D|$ of positive dimension on a normal projective variety, we may write $|D|=\text{Mov}|D|+(\text{fixed part})$ and consider the rational map $\Phi_{|D|}=\Phi_{\text{Mov}|D|}$.  A {\it generic irreducible element} of $|D|$ means a general member of $\text{Mov}|D|$ when $|D|$ is not composed of a pencil or, otherwise, an irreducible component in a general member of $\text{Mov}|D|$.

\subsection{\bf Technical inequalities.} We refer to Chen-Zhang \cite[Section 3]{MZ} for birationality principles (see \cite[Lemma 3.1, Lemma 3.2]{MZ}).   For the convenience of readers, we briefly recall the technical, however useful,  theorem as follows.

Pick a generic irreducible element $S$ of $|M|$. Assume that we have a base point free linear system $|G|$ on $S$. Denote by $C$ a generic irreducible element of $|G|$. Since $\pi^*(K_X)|_S$ is nef and big, Kodaira's lemma implies that there is a positive rational number $\beta$ so that 
$$\pi^*(K_X)|_S-\beta C\geq 0.$$  {}From now on, we mean $\beta$ to be the supremum of all such numbers.  Set $\xi=(\pi^*(K_X)\cdot C)$ and, given any positive integer $m$, define
$$\alpha_m=(m-1-\frac{1}{p}-\frac{1}{\beta})\xi. $$
We will frequently use the following theorem.

\begin{thm}\label{key} (Chen-Zhang \cite[Theorem 3.6]{MZ})  Keep the above setting and notation. Let $m>0$ be an integer.  Then
\begin{itemize}
\item[(1)]  $m\xi\geq \deg(K_C)+\roundup{\alpha_m}$ provided that $\alpha_m>1$;

\item[(2)] $\varphi_m$ is birational provided that $|mK_{X'}||_S$ distinguishes different generic irreducible elements of $|G|$ and that $\alpha_m>2$.

\end{itemize}
\end{thm}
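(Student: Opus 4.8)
\medskip

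\noindent\textbf{Proof proposal.} The plan is to derive both statements from a two-step restriction $X'\to S\to C$, each step made effective by Kawamata--Viehweg vanishing, so that the birationality of $\varphi_m$ and the numerical bound both come down to an estimate on the single curve $C$. The whole mechanism rests on the two effective decompositions recorded in the Set-up: $\pi^*(K_X)\equiv pS+E_1'$ with $E_1'\geq 0$ on $X'$, and $\pi^*(K_X)|_S-\beta C\geq 0$ on $S$. Peeling one copy of $S$ out of $(m-1)\pi^*(K_X)$ costs $\frac1p\pi^*(K_X)$, peeling one copy of $C$ out of the leftover on $S$ costs $\frac1\beta\pi^*(K_X)|_S$, and what survives, $\big(m-1-\frac1p-\frac1\beta\big)\pi^*(K_X)$, is the nef and big $\bQ$-divisor whose restriction to $C$ has degree exactly $\alpha_m$.

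\medskip

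\noindent\emph{Part (1).} Assume $\alpha_m>1$. Since $\pi^*(K_X)$ is nef one has $\xi\geq 0$, so $\alpha_m>1$ forces $\xi>0$ and $m-1-\frac1p-\frac1\beta>0$; thus all the $\bQ$-divisors below are nef and big. I would first run the restriction procedure. Applying Kawamata--Viehweg vanishing on $X'$ to the nef and big $\bQ$-divisor $S+\big(m-1-\frac1p\big)\pi^*(K_X)$ (absorbing the fractional corrections from $\pi^*(K_X)\equiv pS+E_1'$ into the effective divisor $(m-1)E_\pi+\frac1pE_1'$, and using adjunction $(K_{X'}+S)|_S=K_S$) shows that the moving part $|M_m|$ of $|mK_{X'}|$ restricted to a general $S$ dominates $\text{Mov}\,|K_S+\roundup{(m-1-\frac1p)\pi^*(K_X)|_S}|$. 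Applying Kawamata--Viehweg vanishing on $S$ to $C+\big(m-1-\frac1p-\frac1\beta\big)\pi^*(K_X)|_S$ (which is nef and big since $\pi^*(K_X)|_S-\beta C\geq 0$) then shows that this restricts further, on a general $C\in|G|$, to a system dominating $|K_C+D_C|$ with
\[
\deg D_C\ \geq\ \roundup{(m-1-\tfrac1p-\tfrac1\beta)\pi^*(K_X)|_S}\cdot C\ \geq\ \roundup{(m-1-\tfrac1p-\tfrac1\beta)\xi}\ =\ \roundup{\alpha_m},
\]
the middle inequality using that $C$ is nef and an intersection number is an integer. Hence $|M_m|\big|_C\lsgeq|K_C+D_C|$, so $(M_m\cdot C)\geq\deg(K_C+D_C)\geq\deg(K_C)+\roundup{\alpha_m}$. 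On the other hand, since $X$ has canonical singularities one has $M_m\leq m\pi^*(K_X)$ and therefore $(M_m\cdot C)\leq m\xi$. Combining, $m\xi\geq\deg(K_C)+\roundup{\alpha_m}$.

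\medskip

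\noindent\emph{Part (2).} Here I would invoke the birationality principles of \cite[Lemmas 3.1, 3.2]{MZ}: $\varphi_m$ is birational provided that (a) $|mK_{X'}|$ distinguishes two general generic irreducible elements of $|M|$, (b) for a general such $S$, $|mK_{X'}|\big|_S$ distinguishes two general generic irreducible elements of $|G|$, and (c) for a general $C\in|G|$, $|mK_{X'}|\big|_C$ is very ample on $C$. Condition (b) is exactly the hypothesis. Condition (a) holds once $\alpha_m>0$: the same restriction mechanism run one step higher (on $X'$, comparing two general $S$'s) gives it, and this is the content of \cite[Lemma 3.1]{MZ}. For condition (c): by Part (1), $|mK_{X'}|\big|_C\lsgeq|M_m|\big|_C\lsgeq|K_C+D_C|$ with $\deg D_C\geq\roundup{\alpha_m}$; when $\alpha_m>2$ we get $\deg D_C\geq 3$, hence $\deg(K_C+D_C)\geq 2g(C)+1$, so $|K_C+D_C|$ — and a fortiori $|mK_{X'}|\big|_C$, for a general $C$ avoiding the finitely many bad points — is very ample on $C$. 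Therefore $\varphi_m$ is birational.

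\medskip

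\noindent\emph{The main obstacle.} The delicate part throughout is the bookkeeping with moving versus fixed parts and with round-ups at the two restriction steps: one must check that the $\bQ$-divisor fed into Kawamata--Viehweg is genuinely nef and big (not merely big), that the general $S$ and general $C$ are not contained in the relevant base loci so that the chain of restrictions from $X'$ to $S$ to $C$ really carries the moving part down, that the effective error divisors entering $D_C$ avoid the two general points one wishes to separate, and that the constant extracted on $C$ is the ceiling $\roundup{\alpha_m}$ rather than the floor. The degenerate situations $b=g(\Gamma)>0$ or $|M_1|$ composed with a pencil also need attention in Part (2); for these I would quote \cite[Lemmas 3.1, 3.2]{MZ} rather than re-derive the birationality principle from scratch.
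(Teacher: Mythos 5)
Your proposal is essentially the proof of the quoted result: the paper itself gives no argument for Theorem \ref{key}, citing \cite[Theorem 3.6]{MZ}, and the proof there is precisely your two-step Kawamata--Viehweg restriction $X'\to S\to C$ (peeling off $S$ at cost $\frac{1}{p}\pi^*(K_X)$ and $C$ at cost $\frac{1}{\beta}\pi^*(K_X)|_S$), combined with the birationality principle of \cite[Lemmas 3.1, 3.2]{MZ} for distinguishing the $S$'s and the $C$'s. The one point you leave implicit --- that $\alpha_m>1$ gives $\deg D_C\geq 2$, so $|K_C+D_C|$ is base point free and the fixed-part bookkeeping yields $(M_m\cdot C)\geq \deg(K_C)+\roundup{\alpha_m}$ --- is handled the same way in \cite{MZ}, so the two approaches coincide.
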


Note, however, that Theorem \ref{key} (1) implies
\begin{equation}\label{i1}
\xi\geq \frac{p\beta}{p\beta+p+\beta}\cdot \deg(K_C)\end{equation}
by taking a sufficiently large $m$ so that $\alpha_m>1$.

\begin{defn} Let $|N|$ be a moving linear system on a normal projective variety $Z$. We say that the rational map $\Phi_{|N|}$ {\it distinguishes sub-varieties $W_1, W_2\subset Z$} if, set theoretically, $\overline{\Phi_{|N|}(W_1)}\nsubseteqq \overline{\Phi_{|N|}(W_2)}$ and $\overline{\Phi_{|N|}(W_2)}\nsubseteqq \overline{\Phi_{|N|}(W_1)}$. We say that $\Phi_{|N|}$ {\it separates points $P, Q\in Z$} (for $P, Q\not\in \text{Bs}|N|$), if $\Phi_{|N|}(P)\neq \Phi_{|N|}(Q)$.
\end{defn}

\subsection{\bf Other required results}

We recall the following result.

\begin{thm}\label{v} (\cite[Theorem 1.5 (2)]{MA}) Let $X$ be a minimal projective 3-fold of general type with $p_g(X)\geq 4$. Then $K_X^3\geq 2$, which is optimal.
\end{thm}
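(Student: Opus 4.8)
Since this is a recalled result, we only indicate the line one would follow to prove it directly; optimality is witnessed by $X=X_{10}\subset\bP(1,1,1,1,5)$ of Example \ref{ex}(1), for which $p_g=4$ and $K_X^3=2$. Keep the notation of \ref{setup}: $\pi\colon X'\to X$ is the fixed resolution, $|M_1|$ the moving part of $|K_{X'}|$, $S$ a generic irreducible element of $|M_1|$, $\pi^*(K_X)\equiv pS+E_1'$ with $E_1'\geq 0$, and $d_1=\dim\overline{\varphi_1(X)}\in\{1,2,3\}$. The common starting point is the chain
\[
K_X^3=(\pi^*K_X)^3\ \geq\ (\pi^*K_X)^2\cdot M_1\ \geq\ \pi^*K_X\cdot M_1^2\ \geq\ M_1^3 ,
\]
valid because $\pi^*(K_X)$ and $M_1$ are nef and $E_1'\geq 0$; when $M_1^3$ is too small one refines it by restricting $\pi^*(K_X)$ first to a generic irreducible $S$ of $|M_1|$ and then, through a base point free system $|G|$ on $S$, to a generic irreducible $C$ of $|G|$, feeding the invariants $\xi=(\pi^*K_X\cdot C)$ and $\beta=\sup\{t>0:\pi^*(K_X)|_S-tC\geq 0\}$ into Theorem \ref{key} and inequality \eqref{i1}. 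One then splits according to $d_1$.

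If $d_1=3$, then $\varphi_1$ is generically finite onto a nondegenerate $3$-fold $\Sigma\subset\bP^{p_g-1}$, so $K_X^3\geq M_1^3=\deg(\varphi_1)\cdot\deg(\Sigma)\geq\deg(\Sigma)\geq p_g-3$; this already settles $p_g\geq5$. For $p_g=4$ one has $\Sigma=\bP^3$, hence $K_X^3\geq\deg(\varphi_1)$, and it remains to exclude $K_X^3=1$ when $\varphi_1$ is birational onto $\bP^3$: there equality throughout the chain forces all mixed intersection numbers involving $E_1'$ to vanish, and then adjunction on a generic surface section $S\in|M_1|$ — which maps birationally onto a plane, so that $(S|_S)^2=1$ — gives a contradiction.

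If $d_1=2$, then $p=1$, $|M_1|$ induces a fibration $f\colon X'\to\Gamma$ onto a surface, a generic irreducible $S$ of $|M_1|$ admits a fibration over a curve whose general fibre $C$ has genus $\geq2$, and one takes $|G|$ to be the moving part of $|M_1||_S$ so that $S|_S\equiv\mu C$. Nondegeneracy of $\Sigma\subset\bP^3$ forces $\Sigma$ not to be a plane, whence $\mu\geq2$, while $h^0(X,K_X)=4$ forces $\beta\geq1$; combining $K_X^3\geq(\pi^*K_X|_S)^2\geq\mu\xi$ with \eqref{i1} and $\deg(K_C)\geq2$ yields $K_X^3\geq2$ except when $\mu,\beta,\deg(K_C)$ all sit at their minimal values. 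If $d_1=1$, then $S=F$ is a general fibre of $f\colon X'\to\Gamma$, a smooth surface of general type with minimal model $F_0$ and $K_{X'}|_F=K_F$, and $p=p_1=\deg f_*\OO_{X'}(M_1)\geq p_g-1\geq3$, so that $K_X^3\geq p_1(\pi^*K_X|_F)^2$; the crux is a lower bound for $(\pi^*K_X|_F)^2$, obtained by choosing $|G|$ on $F$ (after further blow-ups if needed) according to the admissible invariants $(K_{F_0}^2,p_g(F_0))$ — the moving part of $|\sigma^*K_{F_0}|$, or of a small multiple of it when $p_g(F_0)$ is small — after which $p_1\geq3$ turns even a modest estimate $(\pi^*K_X|_F)^2\geq\frac23$ into $K_X^3\geq2$. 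In both cases a short list of residual configurations — genus-$2$ curve families of near-minimal canonical degree (precisely those occurring in items (2)--(4) of Theorem \ref{m1}) and surfaces $F_0$ with very small invariants — must be treated separately.

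The principal obstacle is exactly this borderline analysis: wherever the one-shot estimates only return a value in $(1,2)$ — the $d_1=3$ sub-case with $\varphi_1$ birational onto $\bP^3$, and above all the $d_1\leq2$ configurations carrying a genus-$2$ pencil of near-extremal canonical degree — one has to recover the missing fraction by a second (curve) restriction and a delicate bookkeeping of the $\bQ$-divisor $E_1'$, applying Theorem \ref{key}(1) repeatedly; this is where essentially all the work lies.
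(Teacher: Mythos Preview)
The paper does not prove this theorem at all: it is simply quoted from \cite{MA} and used later (in Lemma \ref{1-1}). So there is no ``paper's own proof'' to compare against; you correctly flag this at the outset.

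Your outline is broadly in the spirit of \cite{MA}, but one point deserves correction. In the case $d_1=3$ with $p_g=4$, you argue that if $\varphi_1$ were birational onto $\bP^3$ one must exclude $K_X^3=1$ via an adjunction computation on $S$. This is unnecessary and the sketch there is not a proof: the map $\varphi_1$ simply cannot be birational onto $\bP^3$, since $X$ is of general type while $\bP^3$ is rational. Hence $\deg(\varphi_1)\geq 2$ automatically, and $K_X^3\geq M_1^3=\deg(\varphi_1)\cdot\deg(\Sigma)\geq 2$ follows at once. Apart from this, the $d_1\leq 2$ discussion is a fair-minded sketch of where the real work lies, though of course the ``residual configurations'' you list are precisely the content of \cite{MA} and cannot be disposed of in a paragraph.
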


The following special form of Kawamata's extension theorem will be used in our proof.

\begin{thm}\label{extens} (cf. Kawamata \cite[Theorem A]{KawaE}) Let $V$ be a smooth algebraic variety on which $D$ is a smooth divisor such that $K_V+D$ is big. Then the natural homomorphism \begin{equation}\label{mp} H^0(V, m(K_{V}+D))\lrw H^0(D, mK_D)\end{equation} 
is surjective for any integer $m>1$.
\end{thm}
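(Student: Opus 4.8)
The statement is a mild restatement of Kawamata's extension theorem, so the plan is to follow his strategy. First I would turn surjectivity of \eqref{mp} into a cohomology vanishing. Tensoring the ideal sheaf sequence of $D$ in $V$ with $\OO_V(m(K_V+D))$ and using adjunction $(K_V+D)|_D\cong K_D$ gives
\[
0\lrw \OO_V\bigl(m(K_V+D)-D\bigr)\lrw \OO_V\bigl(m(K_V+D)\bigr)\lrw \OO_D(mK_D)\lrw 0 ,
\]
and since $m(K_V+D)-D=K_V+(m-1)(K_V+D)$, surjectivity follows once $H^1\bigl(V,\,K_V+(m-1)(K_V+D)\bigr)=0$. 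Were $K_V+D$ nef (and big) this would be immediate from Kawamata--Viehweg vanishing, since $m\geq 2$ makes $(m-1)(K_V+D)$ nef and big; the whole difficulty is that $K_V+D$ is only assumed big.

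To handle the big-but-not-nef case I would bring in multiplier ideals. Set $L=K_V+D$. After a common log resolution of $D$ and of the systems $|kL|$, pass to the asymptotic multiplier ideal $\mathcal{J}=\mathcal{J}\bigl(V,\|(m-1)L\|\bigr)$; Nadel vanishing gives $H^1\bigl(V,\OO_V(K_V+(m-1)L)\ot\mathcal{J}\bigr)=0$, so the image of the restriction map in \eqref{mp} contains those sections of $\OO_D(mK_D)$ lying in the subsheaf cut out by $\mathcal{J}\cdot\OO_D$. It then remains to see that this subsheaf loses no sections of $|mK_D|$, which by the restriction theorem for multiplier ideals reduces to comparing $\mathcal{J}(V,\|(m-1)L\|)\cdot\OO_D$ with the intrinsic asymptotic multiplier ideal $\mathcal{J}\bigl(D,\|(m-1)K_D\|\bigr)$ on $D$, together with the standard fact that asymptotic multiplier ideals never kill sections of their own linear system. (When $V$ is a surface and $D$ a curve one can instead argue via the Zariski decomposition of $(m-1)(K_V+D)$ and Kawamata--Viehweg vanishing on surfaces, but the same issue with the negative part reappears.)

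The hard part will be the comparison that prevents the fixed part of $|mL|$ from cutting into $D$. The way around it is an induction on $m$: writing $\mathcal{I}_k\subseteq\OO_D$ for the ideal with $\mathrm{Im}\bigl(H^0(V,kL)\rw H^0(D,kK_D)\bigr)=H^0\bigl(D,\OO_D(kK_D)\ot\mathcal{I}_k\bigr)$, multiplication of sections gives $\mathcal{I}_a\cdot\mathcal{I}_b\subseteq\mathcal{I}_{a+b}$, and the surjectivity of \eqref{mp} for smaller indices (known by induction) produces enough products of sections of $L$ vanishing along $D$ to the required order to force $\mathcal{J}(V,\|mL\|)\cdot\OO_D$ to dominate $\mathcal{J}(D,\|mK_D\|)$; combined with the comparison above, this closes the induction, the base case being transparent from bigness. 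Guaranteeing this domination --- i.e.\ that the stable base locus on $V$ meets $D$ in no more than the expected subscheme --- is the genuinely delicate point, and is exactly the content of Kawamata's theorem. Alternatively, the whole induction can be short-circuited by invoking the Ohsawa--Takegoshi $L^2$-extension theorem (or its algebraic incarnations due to Hacon--McKernan and Takayama), which extends pluricanonical forms from the smooth divisor $D$ to $V$ directly.
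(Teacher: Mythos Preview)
The paper does not prove this theorem at all: it is quoted verbatim as a special case of Kawamata's extension theorem \cite[Theorem~A]{KawaE} and used as a black box (to derive Corollary~\ref{ie} and the restriction inequalities in Lemma~\ref{23} and Lemma~\ref{4/3}). So there is no ``paper's own proof'' to compare against --- the authors simply cite Kawamata.

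Your sketch is a faithful outline of how Kawamata (and, in the asymptotic multiplier ideal language, Siu, Hacon--McKernan, and Takayama) actually proves this. The reduction to $H^1\bigl(V,K_V+(m-1)(K_V+D)\bigr)=0$ via the ideal sheaf sequence and adjunction is exactly right, and you correctly identify that the entire content lies in the non-nef case, where one must compare $\mathcal{J}\bigl(\|{(m-1)L}\|\bigr)\cdot\OO_D$ with $\mathcal{J}\bigl(D,\|{(m-1)K_D}\|\bigr)$. Your honest acknowledgement that this comparison is ``the genuinely delicate point, and is exactly the content of Kawamata's theorem'' is accurate: what you have written is a correct roadmap rather than a self-contained proof, and the inductive mechanism you describe (or the Ohsawa--Takegoshi shortcut) is indeed how the gap is closed in the literature. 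For the purposes of this paper, though, none of this is needed --- a citation suffices.
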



\begin{cor} \label{ie} Under the setting of \ref{setup}, if $d_1=1$ and $g(\Gamma)=0$, then
$$\pi^*(K_X)|_F\sim_{\bQ} \frac{p}{p+1}\sigma^*(K_{F_0})+Q'$$
where $\sigma: F\rw F_0$ is the birational contraction onto the minimal model $F_0$ and $Q'$ is an effective $\bQ$-divisor on $F$.
\end{cor}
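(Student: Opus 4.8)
The plan is to combine the structure imposed by $g(\Gamma)=0$ with Kawamata's extension theorem. Since $g(\Gamma)=0$ we have $\Gamma\cong\bP^1$, so the base point free system $|M_1|$, being composed with the fibration $f$, is the pull-back of the complete system $|\OO_{\bP^1}(p)|$; in particular $M_1\sim pF$, whence $pF\le\pi^*(K_X)$, equivalently $F\le\tfrac1p\pi^*(K_X)$ as $\bQ$-divisors. I would then apply Theorem~\ref{extens} with $V=X'$ and $D=F$ a general (hence smooth) fiber: as $K_{X'}$ is big, so is $K_{X'}+F$, and for every integer $m\ge2$ the restriction map $H^0(X',m(K_{X'}+F))\to H^0(F,mK_F)$ is surjective.

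Next I would analyse $|mK_F|$ for $m\gg0$. Since $F$ is of general type with minimal model $\sigma\colon F\rw F_0$, we have $H^0(F,mK_F)\cong H^0(F_0,mK_{F_0})$, and for $m$ large and divisible the moving part of $|mK_F|$ is $\sigma^*|mK_{F_0}|$, which is base point free. Comparing fixed parts, the surjectivity above forces the moving part of $|m(K_{X'}+F)|$, restricted to $F$, to dominate $\sigma^*|mK_{F_0}|$; hence $\mathrm{Mov}\,|m(K_{X'}+F)|\big|_F\ge\sigma^*(mK_{F_0})$ as $\bQ$-divisors.

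The key claim is then a matching upper bound on $X'$: for $m$ divisible,
$$\mathrm{Mov}\,|m(K_{X'}+F)|\ \le\ m\big(\pi^*(K_X)+F\big)\ \le\ \tfrac{m(p+1)}{p}\,\pi^*(K_X),$$
the second inequality being $F\le\tfrac1p\pi^*(K_X)$. For the first, one uses that $\pi_*\OO_{X'}(mK_{X'})=\OO_X(mK_X)$, so $mE_{\pi}$ is contained in every member of $|mK_{X'}|$, together with the fibration $f\colon X'\to\bP^1$: writing $H^0(X',m(K_{X'}+F))=H^0(\bP^1,f_*\OO_{X'}(mK_{X'})\otimes\OO_{\bP^1}(m))$ and running through the summands of the bundle $f_*\OO_{X'}(mK_{X'})$, one checks that $mE_{\pi}$ still lies in the fixed part of $|m(K_{X'}+F)|$, so that $\mathrm{Mov}\,|m(K_{X'}+F)|\le m(K_{X'}+F)-mE_{\pi}=m(\pi^*(K_X)+F)$. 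Granting the claim, I restrict the chain of inequalities to $F$ and combine with the lower bound of the previous paragraph to get $\sigma^*(mK_{F_0})\le\tfrac{m(p+1)}{p}\pi^*(K_X)|_F$; dividing by $m$ gives $\pi^*(K_X)|_F\ge\tfrac{p}{p+1}\sigma^*(K_{F_0})$, and $Q':=\pi^*(K_X)|_F-\tfrac{p}{p+1}\sigma^*(K_{F_0})$ is the required effective $\bQ$-divisor.

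I expect the main difficulty to be precisely the key claim, namely controlling the $\pi$-exceptional divisor: one must rule out that enlarging $|mK_{X'}|$ to $|m(K_{X'}+F)|$ produces sections vanishing to lower order along $E_{\pi}$, and this is what must be extracted from the structure of $f_*\OO_{X'}(mK_{X'})$ over $\bP^1$. A secondary, more routine point is making precise the ``comparison of fixed parts'' in the extension step, i.e. that the restriction to $F$ of the moving part of $|m(K_{X'}+F)|$ really dominates $\sigma^*|mK_{F_0}|$ and not merely a proper sub-system. It is exactly the two inputs $M_1\sim pF$ and $F\le\tfrac1p\pi^*(K_X)$ that produce the sharp coefficient $\tfrac{p}{p+1}$.
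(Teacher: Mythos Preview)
Your overall strategy---apply Kawamata's extension theorem to $(X',F)$ and then compare moving parts---is exactly the paper's. The gap is in your ``key claim'' that $mE_\pi$ remains in the fixed part of $|m(K_{X'}+F)|$. Your direct-image sketch does not establish this: knowing that $H^0(mK_{X'}-mE_\pi)\to H^0(mK_{X'})$ is an isomorphism tells you only that the sheaf map $f_*\OO_{X'}(mK_{X'}-mE_\pi)\hookrightarrow f_*\OO_{X'}(mK_{X'})$ is an isomorphism on global sections, not that it is an isomorphism of sheaves on $\bP^1$. After twisting by $\OO_{\bP^1}(m)$ the induced map on $H^0$ can very well fail to be onto; concretely, components of $E_\pi$ may be horizontal with respect to $f$, and adding $mF$ can produce new sections vanishing to lower order along them. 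So the claim, while perhaps true, is not proved by ``running through the summands'', and you yourself flag it as the main difficulty.

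The paper sidesteps this entirely with a one-line trick. From $pF\le M_1\le K_{X'}$ one has $p(K_{X'}+F)\le (p+1)K_{X'}$, hence $|mp(K_{X'}+F)|\preccurlyeq|m(p+1)K_{X'}|$ for $m$ large and divisible. Now the \emph{standard} fact $M_{m(p+1)}\le m(p+1)\pi^*(K_X)$---precisely the statement that $E_\pi$ sits in the fixed part of a pluricanonical system, which you already accept---applies to the ambient system, giving
\[
m(p+1)\,\pi^*(K_X)|_F\ \ge\ M_{m(p+1)}|_F\ \ge\ \mathrm{Mov}\,|mp(K_{X'}+F)|\big|_F\ \ge\ mp\,\sigma^*(K_{F_0}),
\]
the last step by the surjection $H^0(X',mp(K_{X'}+F))\twoheadrightarrow H^0(F,mpK_F)$ and $\mathrm{Mov}|mpK_F|=|mp\sigma^*(K_{F_0})|$. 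Dividing by $m(p+1)$ finishes. The point is that by first embedding $|mp(K_{X'}+F)|$ into a genuine pluricanonical system on $X'$, one never has to control $E_\pi$ inside the enlarged system $|m(K_{X'}+F)|$ at all; your route requires exactly that, and that is where it stalls.
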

\begin{proof}  In order to apply Theorem \ref{extens}, we set $V=X'$ and  $D=F$. Then, for any sufficiently large and divisible integer $m>0$, one has
$|(p+1)mK_{X'}|\lsgeq |mp(K_{X'}+F)|$ and the surjective map:
\begin{equation}\label{mpp} H^0(X', mp(K_{X'}+F))\lrw H^0(F, mpK_F).\end{equation}
Since $\text{Mov}|mpK_F|=|mp\sigma^*(K_{F_0})|$ and $m(p+1)\pi^*(K_X)\geq M_{m(p+1)}$, we clearly have the following relations:
\begin{equation*}
m(p+1)\pi^*(K_X)|_F\geq M_{m(p+1)}|_F
\geq \text{Mov}(mp(K_{X'}+F))|_F\geq mp\sigma^*(K_{F_0}).
\end{equation*}
The statement follows.
\end{proof}

\section{\bf Proof of the main theorem}

Let $X$ be a minimal projective 3-fold of general type with $p_g(X)=4$. Keep the same setting as in \ref{setup}.

\subsection{Part one. $d_1=1$}\hfill

We have an induced fibration $f:X'\rw \Gamma$ whose general fiber  is $F$. Since $p_g(X)>0$ and the map $$H^0(K_{X'}-F)\rightarrow H^0(K_{X'})$$ cannot be surjective,  one has $p_g(F)>0$.

By Chen-Zhang \cite[4.8]{MZ}, we have the following  proposition:

\begin{prop}\label{>0} Assume $b=g(\Gamma)>0$. Then $\varphi_{4,X}$ is birational if and only if $F$ is not a (1,2) surface.
\end{prop}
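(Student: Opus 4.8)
The plan is to separate the two implications. The easy direction is the "only if": if $F$ is a $(1,2)$ surface, then $X$ is birationally fibered by $(1,2)$ surfaces, hence — exactly as in Example 1.5(2) — it carries a natural genus-$2$ curve family of canonical degree $1$, and by Bombieri's theorem (Table 1, $p_g=2$, $K^2=1$) the restriction $\varphi_{4}|_F$ already fails to be birational on a general fiber, so $\varphi_{4,X}$ cannot be birational. For the "if" direction I would assume $F$ is not a $(1,2)$ surface and prove that $\varphi_{4,X}$ is birational using Theorem \ref{key}. The starting point is that since $b=g(\Gamma)>0$, the fibration $f$ has no rational base, so $|M_1|$ is composed of the pencil $|F|$ only through a base-point-free quotient; more importantly the key quantitative input is the estimate $\pi^*(K_X)|_F\geq \frac{p}{p+1}\sigma^*(K_{F_0})$ — here I would need the $b>0$ analogue of Corollary \ref{ie}, which follows the same way from Kawamata's extension theorem (Theorem \ref{extens}) applied with $V=X'$, $D=F$, noting $K_{X'}+F$ is big, together with $p=p_1\geq p_g(X)-1=3$.

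**Setting up the birationality criterion.** On a general fiber $F$ I would choose the base-point-free system $|G|$ to be the moving part of $|\sigma^*(K_{F_0})|$ (or, when $|K_{F_0}|$ is composed of a pencil, a suitable pencil of curves on $F$), and let $C$ be a generic irreducible element. The two quantities to control are $\beta$ (with $\pi^*(K_X)|_F-\beta C\geq 0$) and $\xi=(\pi^*(K_X)\cdot C)$. From $\pi^*(K_X)|_F\geq \frac{p}{p+1}\sigma^*(K_{F_0})\geq \frac{3}{4}\sigma^*(K_{F_0})$ and $p\geq 3$ one gets $\beta\geq \frac{3}{4}\cdot(\text{the }\beta\text{ internal to }F)$, which together with \eqref{i1} forces $\xi$ to be bounded below. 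One then runs Theorem \ref{key}(2) for $m=4$: the condition $\alpha_4=(4-1-\frac1p-\frac1\beta)\xi>2$ must be verified, and separately one must check that $|4K_{X'}||_F$ distinguishes different generic irreducible elements of $|G|$. The distinguishing step reduces, via the surjection $H^0(X',4K_{X'})\twoheadrightarrow H^0(F, 4K_X|_F)\supseteq H^0(F,4\sigma^*K_{F_0})$ coming again from Kawamata extension, to the statement that $\varphi_{4,F_0}$ separates generic members of $|K_{F_0}|$, which holds for every minimal surface of general type by Table 1 unless we are in the borderline cases $(K^2,p_g)=(1,2)$ or $(2,3)$ with $\varphi_3$ non-birational — the $(1,2)$ case is excluded by hypothesis, and the remaining borderline surfaces must be handled by a direct argument showing $\varphi_4$ still separates the relevant curves on them (this uses $p_g(F)>0$ and that $|4K_{F_0}|$ is very ample off a small locus).

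**Case analysis on $F$.** I expect the proof to split according to the invariants $(c_1^2(F_0),p_g(F_0))$. For $F$ with $p_g(F_0)\geq 3$ and $|K_{F_0}|$ not composed of a pencil, $\varphi_{1,F}$ is already generically finite onto a surface, $\xi$ is comparatively large, and $\alpha_4>2$ is immediate. The genuinely delicate cases are those where $F$ is canonically fibered by curves (so $\xi$ is small): $p_g(F_0)=2$ with $K_{F_0}^2\geq 2$, and $p_g(F_0)=3$, $K_{F_0}^2=2$ with a genus-$2$ pencil. In these I would take $|G|$ to be the canonical pencil, estimate $\beta$ and $\xi$ carefully using $\pi^*(K_X)|_F\geq\frac{3}{4}\sigma^*K_{F_0}$ and the known numerics of such surfaces, and push through $\alpha_4>2$; the separation of fibers of the pencil by $|4K_{X'}|$ then follows from $p_1\geq 3$ (so $|M_1|$ separates the fibers of $f$ over $\Gamma$, and on $F$ one has enough sections of $4K_X|_F$).

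**Main obstacle.** The hardest part will be the borderline surfaces $F$ that are canonically fibered by curves, especially the $(2,3)$-surface with a genus-$2$ canonical pencil: there the crude inequality $\pi^*(K_X)|_F\geq\frac34\sigma^*K_{F_0}$ may not by itself give $\alpha_4>2$, and one must either improve $\beta$ using the structure of $E_1'|_F$ (exploiting $b>0$, which removes the $\Gamma=\mathbb P^1$ contributions and lets one absorb fibers into $Q'$) or replace the naive $|G|$ by a cleverer base-point-free system on $F$; checking that $\varphi_4$ still distinguishes the canonical curves in that situation, without the luxury of Table 1, is the crux and is exactly where the hypothesis "$F$ is not a $(1,2)$ surface" must be fully exploited.
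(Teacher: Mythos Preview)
The paper offers no argument for this proposition: it is simply quoted from \cite[4.8]{MZ}. So there is no proof here to compare against, only the question of whether your outline would succeed. It would, and in fact it is essentially the same argument the paper writes out immediately afterwards for $b=0$: Relation~(\ref{i2}) together with $K_{X'}\geq 3F$ (which follows from $h^0(M_1)=4$ and is independent of $g(\Gamma)$) plus Bombieri's theorem on $\varphi_3$ disposes of every $F$ that is neither a $(1,2)$ nor a $(2,3)$ surface, and the computation of Lemma~\ref{23} handles the $(2,3)$ case. Neither step uses $b=0$, so the paper could have repeated them verbatim instead of citing \cite{MZ}.

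Two places where your sketch wobbles. First, the ``main obstacle'' you flag is not one: for a $(2,3)$ surface $|K_{F_0}|$ is a base-point-free net, not a pencil, and the numerics of Lemma~\ref{23} (take $|G|=|\sigma^*(K_{F_0})|$, get $\beta\geq\tfrac34$ and $\xi\geq\tfrac32$, bootstrap to $\xi\geq\tfrac85$ via $\alpha_5>3$, conclude $\alpha_4\geq\tfrac43\xi>2$) go through unchanged --- there is no need to improve $\beta$, change $|G|$, or worry about genus-$2$ pencils on $F$. Second, when $b>0$ you do \emph{not} have $M_1\sim p_1F$ linearly, only numerically, so your assertion that the analogue of Corollary~\ref{ie} ``follows the same way with $p=p_1$'' needs adjusting: what you actually use is $M_1\geq 3F$ for a general fiber (from $h^0(\Gamma, L(-3P))\geq h^0(L)-3=1$ with $L=f_*\OO_{X'}(M_1)$), and this is what feeds Kawamata extension to give $\pi^*(K_X)|_F\geq\tfrac34\sigma^*(K_{F_0})$. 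Separately, the numerical $p=p_1$ --- in fact $p_1\geq 4$ when $b>0$, since an irrational nondegenerate curve in $\bP^3$ has degree $\geq 4$ --- is what enters $\alpha_m$ in Theorem~\ref{key}. Keep these two roles of ``$p$'' distinct and your argument is complete.
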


Now we assume $b=g(\Gamma)=0$.  By definition we have $p=3$ and $K_{X'}\geq 3F$.
By Relation (\ref{mp}), one has the surjective map
\begin{equation}\label{i2}H^0(X', 3(K_{X'}+F))\lrw H^0(F, 3K_F),\end{equation}
which means that $\varphi_{4,X}$ is birational as long as $F$ is neither a (1,2) surface nor a (2,3) surface. Besides, it is clear that $\varphi_{4,X}$ is not birational when $F$ is a (1,2) surface.

\begin{lem}\label{23} If $F$ is a (2,3) surface, then $\varphi_{4,X}$ is birational.
\end{lem}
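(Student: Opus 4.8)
The plan is to run the standard three-step separation argument along the fibration $f\colon X'\to\Gamma=\bP^1$. First, since $d_1=1$ and $p_1\geq p_g(X)-1=3$, the moving part $|M_1|\subseteq|K_{X'}|$ already separates two general fibers of $f$, hence so does $|4K_{X'}|$ (because $|M_1|\lsleq|4K_{X'}|$). So it suffices to show that $|4K_{X'}|$ restricted to a general fiber $F$ defines a birational map; recall also that $K_{X'}|_F\sim K_F$, the normal bundle of a general fiber being trivial.

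Second, I would use the geometry of a $(2,3)$ surface. For the minimal model $F_0$ one has $q(F_0)=0$, $\chi(\OO_{F_0})=4$, and $F_0$ lies on the Noether line: $\varphi_{1,F_0}\colon F_0\to\bP^2$ is a generically finite double cover, $|K_{F_0}|$ is base point free, and $|mK_{F_0}|$ is the pull-back of $|\OO_{\bP^2}(m)|$ for $m\leq 3$; thus, by Bombieri's theorem, $\varphi_{4,F_0}$ is birational while $\varphi_{3,F_0}$ is not, and the only general point pairs of $F$ not separated by $\text{Mov}|3K_F|$ are the ones conjugate under the covering involution $\tau$ of $\varphi_{1,F_0}$. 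Applying Theorem \ref{extens} with $D=F$ and $m=3$, together with $4K_{X'}\geq 3(K_{X'}+F)$ (valid since $K_{X'}\geq 3F$), gives the surjection $H^0(X',3(K_{X'}+F))\to H^0(F,3K_F)$, hence $|4K_{X'}||_F\lsgeq\text{Mov}|3K_F|$. Therefore $|4K_{X'}||_F$ separates every general pair of points of $F$ except possibly the $\tau$-conjugate ones, and it distinguishes the curves $C$ below (pull-backs of general lines of $\bP^2$) because $\text{Mov}|3K_F|$ does.

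Third, to handle the $\tau$-conjugate pairs I would apply Theorem \ref{key} with $S=F$ and $|G|=\sigma^*(\text{Mov}|K_{F_0}|)$, which is base point free; a generic irreducible element $C$ is the strict transform of a smooth double cover of a general line, a hyperelliptic curve with $\deg(K_C)=4$. By Corollary \ref{ie}, $\pi^*(K_X)|_F\sim_{\bQ}\frac34\sigma^*(K_{F_0})+Q'$, so $\beta\geq\frac34$, $\xi=(\pi^*(K_X)\cdot C)\geq\frac32$ and $p=p_1\geq 3$, whence $\alpha_4\geq 2$. If $p_1\geq 4$, or $\xi>\frac32$, or $\beta>\frac34$, then $\alpha_4>2$, and Theorem \ref{key}(2) — combined with the separation of the curves $C$ above — finishes the proof.

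The remaining case $p_1=3$, $\xi=\frac32$, $\beta=\frac34$ is the real obstacle. One checks that it forces $Q'\equiv 0$, i.e. $\pi^*(K_X)|_F\sim_{\bQ}\frac34\sigma^*(K_{F_0})$ and $\alpha_4=2$ exactly, so the numerical bounds are sharp and fail precisely at $\tau$-conjugate points (and multiplying extendable sections of $|3K_F|$ by a fixed divisor cannot help, since $H^0(F,K_F)$, and more generally $H^0(F,3K_F)$, is entirely $\tau$-invariant). In this rigid situation I would instead try to prove that $H^0(X',4K_{X'})\to H^0(F,4K_F)$ is surjective; as $H^0(F,4K_F)$ then contains the $\tau$-anti-invariant section cutting out the ramification curve of $\varphi_{1,F_0}$, this already yields birationality on $F$. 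Via Kawamata--Viehweg vanishing applied to $4K_{X'}-F=K_{X'}+(3K_{X'}-F)$, this reduces to showing that $3K_{X'}-F$ is nef and big, equivalently that no curve $C$ has $F\cdot C\geq 1$ while $\pi^*(K_X)\cdot C$ is too small; here the rigidity $\pi^*(K_X)\equiv 3F+E_1'$ and the structure of $f$ over $\bP^1$ should be used to exclude such horizontal curves (for instance by intersecting with a general fiber). Establishing this vanishing — or, alternatively, showing that the borderline configuration $\pi^*(K_X)|_F\equiv_{\bQ}\frac34\sigma^*(K_{F_0})$ with $p_1=3$ simply cannot occur — is the step I expect to be the main difficulty.
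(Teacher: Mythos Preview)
Your setup is essentially the same as the paper's: take $|G|=|\sigma^*(K_{F_0})|$, note that $C$ has genus $3$, and compute $p=3$, $\beta\ge\tfrac34$, $\xi\ge\tfrac32$. Where you diverge is at the borderline $\alpha_4=2$: you treat the configuration $(p,\beta,\xi)=(3,\tfrac34,\tfrac32)$ as a genuine residual case and launch an extended attack on it (the involution $\tau$, surjectivity of $H^0(4K_{X'})\to H^0(F,4K_F)$, nef--bigness of $3K_{X'}-F$), while openly admitting that this last step is incomplete. That is the gap.

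The paper eliminates this borderline by a one-line bootstrap you overlooked. With the same data one has
\[
\alpha_5\;\ge\;\bigl(5-1-\tfrac13-\tfrac43\bigr)\xi\;=\;\tfrac{7}{3}\xi\;\ge\;\tfrac{7}{2}\;>\;3,
\]
so Theorem~\ref{key}(1) applied with $m=5$ gives $5\xi\ge\deg(K_C)+\lceil\alpha_5\rceil\ge 4+4=8$, i.e.\ $\xi\ge\tfrac85$. Feeding this back yields
\[
\alpha_4\;\ge\;\tfrac{4}{3}\cdot\tfrac{8}{5}\;=\;\tfrac{32}{15}\;>\;2,
\]
and Theorem~\ref{key}(2) finishes the proof immediately. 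In other words, the ``remaining case'' $\xi=\tfrac32$ never occurs, so the whole discussion of $\tau$-conjugate pairs, the rigidity $Q'\equiv0$, and the nefness of $3K_{X'}-F$ is unnecessary. (Incidentally, in this context $p_1=3$ exactly, since $f_*\OO_{X'}(M_1)$ is a line bundle on $\bP^1$ with $h^0=p_g(X)=4$; so your ``if $p_1\ge4$'' escape route is not available.)
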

\begin{proof}  By Bombieri's theorem (cf. \cite{Bom, BPV}),  $|3\sigma^*(K_{F_0})|$ is base point free. Thus Relation (\ref{i2}) implies
$$4\pi^*(K_X)|_F\geq {M_4}|_F\geq \text{Mov}(3(K_{X'}+F))|_F\geq 3\sigma^*(K_{F_0}),$$
which gives $\pi^*(K_X)|_F\geq \frac{3}{4}\sigma^*(K_{F_0})$.

Take $|G|=|\sigma^*(K_{F_0})|$, which is base point free (see \cite[p227]{BPV}). The generic irreducible element $C$ of $|G|$ is a smooth curve of genus 3. Relation (\ref{i2}) also implies
$|4K_{X'}||_F\lsgeq |3\sigma^*(K_{F_0})|$, which distinguishes different generic $C$.

We have $p=3$, $\beta\geq \frac{3}{4}$ and 
$$\xi=(\pi^*(K_X)\cdot C)\geq \frac{3}{4}C^2=\frac{3}{2}.$$ Since $\alpha_5\geq \frac{7}{3}\xi>3$, Theorem \ref{key} (1) implies $\xi\geq \frac{8}{5}$. Now since $$\alpha_4\geq \frac{4}{3}\xi>2,$$ Theorem \ref{key} (2) implies that $\varphi_{4,X}$ is birational.
\end{proof}

Thus we have the following conclusion in the case $d_1=1$. 

\begin{cor}\label{d1} Let $X$ be a minimal projective 3-fold of general type with $p_g(X)=4$. Keep the same notation as above. Assume $d_1=1$. Then $\varphi_{4,X}$ is not birational if and only if $F$ is a (1,2) surface. When $\varphi_{4,X}$ is not birational, $X$ has a natural genus-$2$ curve family $\mathfrak{C}$ of canonical degree 1.
\end{cor}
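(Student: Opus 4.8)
\textbf{Proof proposal for Corollary \ref{d1}.}
The strategy is simply to package the results already proved in Part one into the desired ``if and only if'' statement. Recall that we are in the case $d_1 = 1$, so there is an induced fibration $f : X' \to \Gamma$ with general fiber $F$, and we have already observed that $p_g(F) > 0$. The proof splits according to $b = g(\Gamma)$.

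First I would dispose of the case $b > 0$: Proposition \ref{>0} says precisely that $\varphi_{4,X}$ is birational if and only if $F$ is not a $(1,2)$ surface, which is one half of what we want. Next, for $b = 0$ I would invoke the surjectivity in Relation (\ref{i2}) together with the subsequent remarks: if $F$ is neither a $(1,2)$ nor a $(2,3)$ surface then $\varphi_{4,X}$ is birational (since $|3K_F|$ restricted from $|3(K_{X'}+F)|$ already gives the $3$-canonical map of $F_0$, which is birational for all other surface types by Table 1 / Bombieri), and if $F$ is a $(1,2)$ surface then $\varphi_{4,X}$ is not birational; the remaining $(2,3)$ case is handled by Lemma \ref{23}, which shows $\varphi_{4,X}$ is birational there too. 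Combining, in the $b = 0$ case $\varphi_{4,X}$ is not birational if and only if $F$ is a $(1,2)$ surface, matching the $b > 0$ conclusion.

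It remains to verify the last sentence: when $\varphi_{4,X}$ is not birational, i.e.\ when $F$ is a $(1,2)$ surface, $X$ carries a natural genus-$2$ curve family $\mathfrak{C}$ of canonical degree $1$. Here I would argue on $F$: a $(1,2)$ surface $F_0$ has a canonical pencil whose general member is a genus-$2$ curve $C_0$ with $(K_{F_0}\cdot C_0) = 1$. Moving this pencil to $X'$ fiber by fiber over $\Gamma$ gives a two-dimensional (over $\Gamma$, hence over $X$) family of curves $\mathfrak{C}$ on $X'$, and pushing forward to $X$ produces the desired family. The canonical degree computation uses Corollary \ref{ie} (valid since $b = 0$): $\pi^*(K_X)|_F \sim_{\bQ} \frac{p}{p+1}\sigma^*(K_{F_0}) + Q' = \frac{3}{4}\sigma^*(K_{F_0}) + Q'$, so $(\pi^*(K_X)\cdot C) \geq \frac{3}{4}(\sigma^*(K_{F_0})\cdot C) = \frac{3}{4}$; on the other hand the $(1,2)$ structure forces $(\pi^*(K_X)\cdot C)$ to be small, and one checks it equals $1$ by a direct argument (the genus-$2$ curve $C_0$ has $\deg K_{C_0} = 2$, and the general principles of Section 2.4 pin down $\xi = (\pi^*(K_X)\cdot C_0) = 1$ for a $(1,2)$ surface fiber). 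For the $b > 0$ case the same construction works, the genus-$2$ fibration on $F$ being again the canonical pencil.

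The routine verifications — that the pencil on $F$ globalizes to a genuine algebraic family over $\Gamma$ and descends to $X$, and the precise value $\xi = 1$ rather than merely $\xi \geq \tfrac34$ — are where the actual bookkeeping lies; the main conceptual obstacle, if any, is checking that the family $\mathfrak{C}$ so constructed is independent of the birational model $X'$ and of the choices made, but this follows because the canonical pencil on the $(1,2)$ surface $F_0$ is itself canonically (hence birationally) intrinsic, and the fibration $f : X' \to \Gamma$ is determined by $\varphi_1$. I would not belabor the uniqueness assertion here, as it is subsumed in the general uniqueness statement of Theorem \ref{m1}.
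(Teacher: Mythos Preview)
Your proof of the biconditional is correct and matches the paper exactly: Proposition \ref{>0} for $b>0$, and Relation (\ref{i2}) together with Lemma \ref{23} for $b=0$.

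For the second sentence --- the construction of $\mathfrak{C}$ and the value $\xi=1$ --- your idea is the right one, but the paper's execution is cleaner on precisely the two points you flag as loose. First, instead of ``moving the pencil fiber by fiber'', the paper globalizes in one stroke via the relative canonical map $\Psi:X'\dashrightarrow \mathbb{P}(f_*\omega_{X'/\Gamma})$ over $\Gamma$; after further blowing up so that $\Psi$ is a morphism, its general fiber is a smooth genus-$2$ curve, and $\mathfrak{C}$ is simply the set of fibers of $\Psi$. This avoids any ad hoc gluing and works uniformly for $b=0$ and $b>0$. Second, for the equality $(\pi^*(K_X)\cdot\tilde{C})=1$ the paper does not attempt an in-place derivation from Corollary \ref{ie} (which, as you note, only gives the lower bound $3/4$ and only when $b=0$); it simply cites \cite[4.10]{MZ}, where this computation is already carried out for $(1,2)$-fibered threefolds. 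Your sketch would need that external input in any case, so the paper's route is both shorter and complete.
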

\begin{proof} The first part follows from Proposition \ref{>0}, Relation (\ref{i2}) and Lemma \ref{23}.

When $\varphi_4$ is not birational, we have an induced fibration $f:X'\lrw \Gamma$ whose general fiber a (1,2) surface.
We may consider the relative canonical map $\Psi: X'\dashrightarrow
\mathbb{P}({f_*\omega_{X'/\Gamma}})$ over $\Gamma$. By taking further
birational modifications we may assume that $\Psi$ is a morphism
over $\Gamma$. So we have the following commutative diagram:
\medskip

\begin{picture}(50,80) \put(100,0){$X$} \put(100,60){$X'$}
\put(170,0){$\Gamma$} \put(170,60){$\mathbb{P}(f_*\omega_{X'/\Gamma})$}
\put(112,65){\vector(1,0){53}} \put(106,55){\vector(0,-1){41}}
\put(175,55){\vector(0,-1){43}} \put(114,58){\vector(1,-1){49}}
\multiput(112,2.6)(5,0){11}{-} \put(162,5){\vector(1,0){4}}
\put(133,70){$\Psi$} 
\put(92,30){$\pi$}
\put(132,-6){$\varphi_{1}$}\put(136,40){$f$}
\end{picture}
\bigskip

Clearly the general fiber of $\Psi$ is a smooth curve of genus 2. Set $\mathfrak{C}$ to be the set of fibers of $\Psi$. As been proved in Chen-Zhang \cite[4.10]{MZ}, we know $(\pi^*(K_X)\cdot \tilde{C})=1$ for a general element $\tilde{C}\in \mathfrak{C}$. The $\pi$-image of $\mathfrak{C}$ is what we have claimed on $X$.
\end{proof}
\medskip

\subsection{\bf Part two. $d_1=2$}\hfill

We have an induced fibration $f:X'\lrw \Gamma$ onto a normal surface $\Gamma$.  Pick a general member $S\in |M_1|$. We have $p=1$ by definition. Set $|G|=|M_1|_S|$. Let $C$ be a generic irreducible element of $|G|$. Clearly $C$ is a smooth curve of genus $g(C)\geq 2$.  We may write
\begin{equation}\pi^*(K_X)|_S\equiv \beta C+E_{1,S}'\end{equation}
where $0\leq E_{1,S}'\leq E_1'|_S$ and 
\begin{equation}\label{e4.1}\beta\geq \deg(s)\deg \tilde{g}(X')\geq p_g(X)-2=2.\end{equation}

\begin{lem}\label{sep} For the general member $S\in |M_1|$, $|4K_{X'}||_S$ distinguishes different generic irreducible elements of $|G|$.
\end{lem}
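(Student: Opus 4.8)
The plan is to show that $|4K_{X'}|$ separates two distinct general fibers $C_1, C_2$ of the induced map $S \to$ (image curve), using the fact that such fibers are swept out by moving members of $|M_1|$ itself. The key observation is that a generic irreducible element $C$ of $|G| = |M_1|_S|$ is cut out on $S$ by another general member $S' \in |M_1|$, so distinguishing different $C$'s on $S$ amounts to distinguishing different members of $|M_1|$ near $S$, which $|K_{X'}|$ already does by construction (since $|M_1|$ is base point free and $\varphi_1$ is generically finite onto its image $\Sigma$ of dimension $d_1 = 2$).

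First I would set up the restriction sequence. Since $K_{X'} \geq S$ (indeed $\pi^*(K_X) \equiv S + E_1'$ with $p = 1$), write $4K_{X'} = K_{X'} + (3K_{X'})$ and use that $|3K_{X'}| \succcurlyeq |3S|$ or more economically $|4K_{X'}| \succcurlyeq |S| + |3K_{X'}|_{\text{restricted}}$; the cleanest route is the exact sequence
$$0 \to \mathcal{O}_{X'}(4K_{X'} - S) \to \mathcal{O}_{X'}(4K_{X'}) \to \mathcal{O}_S(4K_{X'}|_S) \to 0,$$
and observe $4K_{X'} - S \geq 3S$ (up to effective divisors), so $h^0(X', 4K_{X'}-S) \geq h^0(X', 3S) \geq h^0(\Sigma, \mathcal{O}(3))$ is large; this gives $|4K_{X'}|_S \succcurlyeq \text{Mov}|4K_{X'}-S| |_S \succcurlyeq |G|$ via the surjectivity onto a system containing $|G|$. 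More precisely, I want $|4K_{X'}|_S \succcurlyeq |G| = |M_1|_S|$ plus the fact that the global system already separates the surfaces $S'$ through a general point of $S$.

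Second, I would argue that $|M_1|$ itself distinguishes different generic irreducible elements of $|G|$ on $S$: two general members $C_1, C_2 \subset S$ lie on distinct general members $S_1', S_2'$ of $|M_1|$, and since $\varphi_1|_S$ (or the Stein factorization $f$) is generically finite onto a curve $\Gamma_S$ of the surface $\Gamma$, we have $\overline{\varphi_1(C_i)}$ are distinct points, hence $|M_1|$ — being the pullback of $\mathcal{O}_\Sigma(1)$ — distinguishes them. Combining with the previous step, $|4K_{X'}|_S \succcurlyeq |M_1|_S$ up to a fixed part that does not contain $C_1 \cup C_2$ in its support (for general $C_i$), so $|4K_{X'}|_S$ distinguishes $C_1$ and $C_2$ as well.

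The main obstacle I anticipate is the bookkeeping of fixed parts: after restricting to $S$, the system $|4K_{X'}|_S$ contains $|G|$ plus a possibly large fixed divisor, and I must ensure this fixed divisor does not accidentally contain the general elements $C_1, C_2$ — otherwise the separation would be destroyed. The standard fix is that the fixed part is independent of the general pair $(C_1, C_2)$, while the $C_i$ move in $|G|$, so for general choices they avoid it; one uses that $|G|$ is base-point-free or at least that its generic element is irreducible and moving (stated in the setup, $C$ is a smooth curve of genus $\geq 2$). I would also need to invoke $\beta \geq 2$ from Relation (\ref{e4.1}) only if a numerical inequality is needed to conclude $h^0(4K_{X'} - S - C) $ is still positive, but I expect the qualitative base-point-freeness argument above to suffice without computation.
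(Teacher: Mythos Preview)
Your approach is correct but takes a genuinely different route from the paper's. The paper argues via two applications of Kawamata--Viehweg vanishing: first to pass from $|K_{X'}+\roundup{2\pi^*(K_X)}+M_1|$ to $|K_S+\roundup{2L}|$ on $S$, and then, using that $2L-C_1-C_2-\tfrac{2}{\beta}E_{1,S}'\equiv (2-\tfrac{2}{\beta})L$ is nef and big (here the bound $\beta\geq 2$ enters), to get a surjection onto $H^0(C_1,K_{C_1}+D_1)\oplus H^0(C_2,K_{C_2}+D_2)$ with $\deg D_i>0$, which forces the separation.

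Your argument is more elementary: since $|4K_{X'}|\succcurlyeq |M_1|$ (because $4K_{X'}-M_1\sim 3M_1+4Z_1$ is effective), $\varphi_4$ separates at least as much as $\varphi_1$; and $\varphi_1|_S$ already sends two general $C_1,C_2$ to distinct points of the curve $\tilde g(S)\subset\Sigma$. This avoids vanishing theorems entirely and does not use $\beta\geq 2$. The trade-off is that your argument only separates \emph{general} pairs: if $\deg(s)>1$ there are particular pairs $C_1,C_2$ in the same $s$-fiber that $\varphi_1$ collapses, whereas the paper's vanishing proof handles arbitrary generic pairs. For the intended application (the birationality criterion of Theorem~\ref{key}(2)) general pairs suffice, so this is not a defect.

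Two stylistic remarks on the write-up: the exact-sequence paragraph is superfluous---once you observe $|4K_{X'}|\succcurlyeq|M_1|$ globally, the restriction to $S$ and the fixed-part bookkeeping are immediate (the fixed divisor $3M_1+4Z_1$ is independent of the moving $C_i$, so general $C_i$ avoid it). Also, the phrase ``$\varphi_1|_S$ is generically finite onto a curve'' is mis-stated; you mean that $f|_S:S\to f(S)\subset\Gamma$ is a fibration onto a curve whose general fibers are precisely the $C_i$.
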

\begin{proof} We have $|4K_{X'}|\lsgeq |K_{X'}+\roundup{2\pi^*(K_X)}+M_1|$. On the other hand, the Kawamata-Viehweg vanishing theorem \cite{KV, VV} implies:
\begin{equation}\label{e1}
|K_{X'}+\roundup{2\pi^*(K_X)}+M_1||_S=
|K_S+\roundup{2\pi^*(K_X)}|_S|\lsgeq |K_S+\roundup{2L}|
\end{equation}
where $L=\pi^*(K_X)|_S$ is an effective nef and big $\bQ$-divisor on $S$.

For arbitrary two different generic irreducible elements $C_1$ and $C_2$, since $$2L-C_1-C_2-\frac{2}{\beta}E_{1,S}'\equiv (2-\frac{2}{\beta})L$$ is nef and big, the Kawamata-Viehweg  vanishing theorem gives the surjective map:
\begin{eqnarray*}
&&H^0(S, K_S+\roundup{2L-\frac{2}{\beta}E_{1,S}'})\\
&\lrw& H^0(C_1, K_{C_1}+D_1)\oplus H^0(C_2, K_{C_2}+D_2)\end{eqnarray*}
where $D_i=(\roundup{2L-\frac{2}{\beta}E_{1,S}'}-C_i)|_{C_i}$ with
$$\deg(D_i)\geq (2-\frac{2}{\beta})\xi>0$$ for $i=1,2$. Since $h^0(C_i, K_{C_i}+D_i)>0$, $|K_S+2L|$ clearly distinguishes $C_1$ and $C_2$.
\end{proof}

Though the proof in this part is relatively long, Lemma \ref{sep} tells us that one only needs to verify the numerical condition of Theorem \ref{key} to prove the main theorem. In fact, the rest of this part is to check the numerical condition ``$\alpha_4>2$''.

\begin{lem}\label{g>2} If $g(C)\geq 3$,  then $\varphi_{4,X}$ is birational.
\end{lem}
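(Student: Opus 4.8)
The plan is to apply Theorem \ref{key}(2) together with Lemma \ref{sep}, so that everything reduces to verifying the numerical inequality $\alpha_4>2$ when $g(C)\geq 3$. Recall that $p=1$ in this case, so $\alpha_4=(2-\tfrac1\beta)\xi$. Since $\beta\geq 2$ by \eqref{e4.1}, we have $\alpha_4\geq \tfrac32\xi$, and hence it suffices to prove $\xi>\tfrac43$, indeed $\xi\geq\tfrac32$ would be more than enough. So the whole content of the lemma is a lower bound for $\xi=(\pi^*(K_X)\cdot C)$.

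The first step is to establish a crude lower bound for $\xi$ from the inequality \eqref{i1}: with $p=1$, $\beta\geq 2$, and $\deg(K_C)=2g(C)-2\geq 4$, we get $\xi\geq \tfrac{2\beta}{3\beta+2}\cdot\deg(K_C)\geq \tfrac{2\beta}{3\beta+2}\cdot 4$, which is increasing in $\beta$ and already gives $\xi\geq \tfrac{16}{8}\cdot\tfrac12=1$ at $\beta=2$; more carefully, at $\beta=2$ one gets $\xi\geq \tfrac{4}{8}\cdot 4=2$. Wait — one must be careful because $\beta$ can be large, in which case \eqref{i1} gives $\xi$ close to $\deg(K_C)\geq 4$. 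The point is that \eqref{i1} alone, using only $\beta\geq 2$ and $g(C)\geq 3$, should already force $\xi\geq 2$, and then $\alpha_4\geq \tfrac32\cdot 2=3>2$, so Theorem \ref{key}(2) applies. If the naive bound is not quite enough in some edge configuration (e.g. if $\deg(K_C)$ could be as small as... no, $g(C)\geq 3$ pins $\deg(K_C)\geq 4$), one can sharpen by first using Theorem \ref{key}(1) with a large $m$, or with $m=5$: since $\alpha_5=(3-\tfrac1\beta)\xi\geq \tfrac52\xi$, once a preliminary bound $\xi\geq 1$ is in hand we get $\alpha_5>1$, hence $5\xi\geq \deg(K_C)+\roundup{\alpha_5}\geq 4+\roundup{\tfrac52}=4+3=7$, giving $\xi\geq \tfrac75$; iterating, $\alpha_5\geq \tfrac52\cdot\tfrac75=\tfrac72>3$, so $5\xi\geq 4+4=8$, i.e. $\xi\geq \tfrac85$, and then $\alpha_4\geq \tfrac32\cdot\tfrac85=\tfrac{12}{5}>2$.

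With $\xi$ bounded below sufficiently, the conclusion is immediate: Lemma \ref{sep} guarantees that $|4K_{X'}||_S$ distinguishes different generic irreducible elements of $|G|$, and $\alpha_4>2$ has just been verified, so Theorem \ref{key}(2) yields that $\varphi_{4,X}$ is birational. I expect the main (though still routine) obstacle to be organizing the bootstrap on $\xi$ cleanly: one has to feed the crude estimate from \eqref{i1} into Theorem \ref{key}(1) for $m=5$, possibly iterate once, and make sure the rounding goes the right way — there is a mild subtlety in that $\beta$ is only bounded below, so one should phrase all intermediate inequalities using $\beta\geq 2$ and check monotonicity, rather than pretending $\beta$ is a fixed number. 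Once $\xi\geq \tfrac85$ (or even $\xi\geq \tfrac43+\epsilon$) is secured, there is nothing left to do.
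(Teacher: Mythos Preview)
Your approach matches the paper's: use Lemma~\ref{sep} to handle the separation hypothesis in Theorem~\ref{key}(2), then verify $\alpha_4>2$ by bounding $\xi$ from below. There is one slip, however: with $p=1$, inequality~\eqref{i1} reads $\xi\geq\frac{\beta}{2\beta+1}\deg(K_C)=\frac{\deg(K_C)}{2+1/\beta}$, not $\frac{2\beta}{3\beta+2}\deg(K_C)$ (you appear to have plugged in $p=2$). With the correct coefficient, $\beta\geq 2$, and $\deg(K_C)\geq 4$, one gets $\xi\geq\frac{4}{2+1/2}=\frac{8}{5}$ in a single step, whence $\alpha_4\geq\frac{3}{2}\cdot\frac{8}{5}=\frac{12}{5}>2$. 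This is precisely the paper's argument --- no bootstrap is needed.

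Your iteration through Theorem~\ref{key}(1) at $m=5$ does correctly reach $\xi\geq\frac{8}{5}$ and so salvages the proof, but it is a detour around the miscomputation; in particular, your intermediate assertion that \eqref{i1} alone forces $\xi\geq 2$ is false (it yields only $\frac{8}{5}$), though the final conclusion survives because $\frac{8}{5}$ already suffices.
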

\begin{proof}
Since $\beta\geq  2$, it follows from Inequality (\ref{i1}) that
$$\xi\geq \frac{\deg(K_C)}{2+\frac{1}{\beta}}\geq\frac{8}{5}.$$ Then $\alpha_4\geq(4-2-\frac{1}{2})\xi\geq \frac{12}{5}>2$. By Theorem \ref{key}(2), $\varphi_{4,X}$ is birational.
\end{proof}

\begin{lem}\label{16/5}  If $g(C)=2$, $\beta\geq 3$ and $\varphi_{4,X}$ is not birational, then either $\xi=1$ or $\xi=\frac{6}{5}$ and $\deg \tilde{g}(X')=3$.
\end{lem}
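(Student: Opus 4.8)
The plan is to squeeze $\xi$ from both sides using Theorem \ref{key} together with the distinguishing property proved in Lemma \ref{sep}, and then to pin $\xi$ down to the two stated values.

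Since $d_1=2$ we have $p=1$, and since $g(C)=2$ we have $\deg(K_C)=2$; thus $\alpha_m=(m-2-\frac{1}{\beta})\xi$. By Lemma \ref{sep}, $|4K_{X'}||_S$ distinguishes different generic irreducible elements of $|G|$, so Theorem \ref{key}(2) would make $\varphi_{4,X}$ birational whenever $\alpha_4>2$. As $\varphi_{4,X}$ is not birational, $\alpha_4\le 2$, i.e. $(2-\frac{1}{\beta})\xi\le 2$, which gives $\xi\le\frac{2\beta}{2\beta-1}\le\frac65$ (using $\beta\ge 3$). On the other side, Inequality \eqref{i1} with $p=1$ and $\deg(K_C)=2$ gives $\xi\ge\frac{2\beta}{2\beta+1}\ge\frac67$; feeding this into $\alpha_5=(3-\frac{1}{\beta})\xi$ yields $\alpha_5\ge\frac{16}{7}>2$, whence Theorem \ref{key}(1) with $m=5$ gives $5\xi\ge 2+\roundup{\alpha_5}\ge 5$, i.e. $\xi\ge 1$. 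So far, $1\le\xi\le\frac65$.

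Next I would pin $\xi$ to an endpoint by an induction on $m$. Assume $\xi<\frac65$; I claim $\xi<\frac{m+1}{m}$ for every integer $m\ge 5$. The base case $m=5$ is the assumption. For the inductive step, suppose to the contrary $\xi\ge\frac{m+2}{m+1}$. An elementary computation shows that for $m\ge 5$ and $\beta\ge 3$ one has $\frac{m+2}{m+1}>\frac{(m-2)\beta}{(m-2)\beta-1}$, hence $\alpha_m=(m-2-\frac{1}{\beta})\xi>m-2$; since $m-2\ge 3>1$, Theorem \ref{key}(1) applies and gives $m\xi\ge 2+\roundup{\alpha_m}\ge 2+(m-1)=m+1$, i.e. $\xi\ge\frac{m+1}{m}$, contradicting the inductive hypothesis. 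Therefore $\xi<\frac{m+1}{m}$ for all $m\ge 5$, which forces $\xi\le 1$ and hence $\xi=1$. Combined with the first paragraph, this establishes the dichotomy: either $\xi=1$, or $\xi=\frac65$.

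It remains to treat the case $\xi=\frac65$. Then $\xi\le\frac{2\beta}{2\beta-1}$ forces $\beta=3$, and Relation \eqref{e4.1} gives $\deg(s)\cdot\deg\tilde{g}(X')\le\beta=3$. As $\tilde{g}(X')$ is a nondegenerate surface in $\bP^3$, its degree is at least $2$; hence $\deg(s)=1$ and $\deg\tilde{g}(X')\in\{2,3\}$. The heart of the matter --- and the step I expect to be the real obstacle --- is to exclude $\deg\tilde{g}(X')=2$, i.e. the case where the canonical image is a quadric surface $\Gamma\cong\Sigma\subset\bP^3$. In that situation $M_1|_S\sim 2C$, so $\pi^*(K_X)|_S\sim_{\bQ}2C+E_1'|_S$ with, because $\beta=3$, $E_1'|_S\sim_{\bQ}C+\Delta$ for an effective $\bQ$-divisor $\Delta$ containing no fibre of $f|_S$; one has to show that, together with $\xi=(\Delta\cdot C)=\frac65$ and the explicit ruling geometry of a quadric surface (which equips $X'$ with an auxiliary fibration onto $\bP^1$ whose general fibre $T$ carries $\pi^*(K_X)|_T\sim_{\bQ}C+E_1'|_T$), this leads to a numerical contradiction. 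Ruling out the quadric leaves $\deg\tilde{g}(X')=3$, which finishes the proof.
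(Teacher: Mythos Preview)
Your argument for the dichotomy $\xi\in\{1,\tfrac65\}$ is correct and is essentially the paper's proof run in the opposite direction: the paper assumes $\xi>1$, finds some $l_0>5$ with $\xi\ge\frac{l_0+1}{l_0}$, and uses Theorem~\ref{key}(1) recursively to push $\xi$ up to $\tfrac65$; you assume $\xi<\tfrac65$ and push $\xi$ below every $\frac{m+1}{m}$. The initial bounds ($\xi\ge1$ and $\xi\le\tfrac65$) and the use of Theorem~\ref{key}(2) via Lemma~\ref{sep} are identical in substance.

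Where you go astray is the last paragraph. You treat the exclusion of $\deg\tilde g(X')=2$ as the ``real obstacle'' and leave it unresolved, but in the paper there is nothing to exclude. In the $d_1=2$ set--up of \S3.2, the decomposition $\pi^*(K_X)|_S\equiv \beta C+E'_{1,S}$ is obtained from $M_1|_S\equiv(\deg s)(\deg\tilde g(X'))\,C$ with $E'_{1,S}=E'_1|_S$; that is, $\beta$ here \emph{is} the integer $\deg(s)\cdot\deg\tilde g(X')$ (this is exactly what Inequality~\eqref{e4.1} records), and the case split between Lemma~\ref{16/5} ($\beta\ge3$) and Proposition~\ref{g2} ($\beta=2$) is a split on this integer. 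So once your argument pins down $\beta=3$, you immediately get $\deg(s)\cdot\deg\tilde g(X')=3$, and since a nondegenerate surface in $\bP^3$ has degree $\ge2$ this forces $\deg(s)=1$ and $\deg\tilde g(X')=3$. The quadric case is precisely the content of the \emph{next} proposition, not something to be redone inside this lemma; the scenario you sketch ($M_1|_S\sim 2C$ yet $\beta=3$, i.e.\ a full moving fibre $C$ hiding inside the fixed divisor $E'_1|_S$) simply does not arise under the paper's convention for $\beta$ in \S3.2.
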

\begin{proof} When $g(C)=2$, since we already know $\beta\geq 2$, we get from Inequality (\ref{i1}) that
$\xi\geq \frac{2}{1+1+\frac{1}{2}}=\frac{4}{5}$. Then, since 
$$\alpha_4\geq (2-\frac{1}{2})\xi\geq \frac{6}{5}>1,$$ Theorem \ref{key} (1) implies $\xi\geq 1$.

Assume $\xi>1$ and $\beta\geq 3$.  {}Find an integer $l_0>5$ such that $\xi\geq \frac{l_0+1}{l_0}$. Set $m'=l_0-1$ and then we have
$$\alpha_{m'}=(l_0-1-2-\frac{1}{\beta})\xi\geq (l_0-\frac{10}{3})\frac{l_0+1}{l_0}>l_0-3>1.$$
By Theorem \ref{key} (1), one gets $\xi\geq \frac{l_0}{l_0-1}$. 
Recursively running this procedure as long as $m'\geq 5$, we 
finally get $\xi\geq \frac{6}{5}$. Clearly, if $\beta>3$, the argument 
implies $\xi>\frac{6}{5}$.  When $\xi>\frac{6}{5}$, since 
$\alpha_4=(2-\frac{1}{\beta})\xi\geq \frac{5}{3}\xi>2$, $\varphi_{4,X}$ is birational by Theorem \ref{key} (2).  In other words, if $\xi>1$, $\beta\geq 3$ and $\varphi_{4,X}$ is not birational, then $\xi=\frac{6}{5}$ and $\beta=3$.  By Inequality (\ref{e4.1}), this means $\deg \tilde{g}(X')=3$.
\end{proof}

\begin{prop}\label{g2} Assume $g(C)=2$, $\beta=2$ and $\varphi_{4,X}$ is not birational. Then either $\xi=1$ or $\xi=\frac{4}{3}$, $\deg\tilde{g}(X')=2$, $\tilde{g}(X')=\bar{\mathbb F}_2$ and the general irreducible component in $\tilde{g}^{-1}(l)$ is $C$-horizontally integral, where $l$ is the general line in $\bar{\mathbb F}_2$ passing through the vertex.
\end{prop}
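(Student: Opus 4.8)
The plan is to combine the numerical criterion of Theorem~\ref{key} with the explicit geometry of degree-two surfaces in $\bP^3$, and then to reduce the failure of $4$-canonical birationality to a statement on a genus-two fibred surface, handled by Kawamata's extension theorem and Kawamata--Viehweg vanishing. First I would identify the base: the canonical image $\Sigma:=\tilde g(X')\subset\bP^3$ is non-degenerate, so $\deg\tilde g(X')\ge 2$; combined with $\beta=2$ and the inequality $\beta\ge\deg(s)\deg\tilde g(X')$ of~(\ref{e4.1}), this forces $\deg(s)=1$ and $\deg\tilde g(X')=2$, so $\Gamma$ is birational to the quadric surface $\Sigma$, which is either $\bP^1\times\bP^1$ or the cone $\bar{\mathbb F}_2$. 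Along the way I record the dictionary to be used later: a generic irreducible element $C$ of $|G|$ is a general fibre of $f$; a general hyperplane section $H_0$ of $\Sigma$ is a conic $\cong\bP^1$ of self-intersection $2$, so $M_1|_S\sim 2C$ and hence $\pi^*(K_X)|_S\sim_{\bQ}2C+E_1'|_S$.

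Next, the numerical bounds. Feeding $p=1$, $\beta=2$, $\deg K_C=2$ into~(\ref{i1}) gives $\xi\ge\frac45$, whence $\alpha_4=\frac32\xi>1$ and Theorem~\ref{key}(1) yields $4\xi\ge 2+\roundup{\frac32\xi}\ge 4$, i.e.\ $\xi\ge 1$. On the other hand, if $\xi>\frac43$ then $\alpha_4=\frac32\xi>2$, and since Lemma~\ref{sep} supplies the separation of generic irreducible elements of $|G|$ by $|4K_{X'}||_S$, Theorem~\ref{key}(2) makes $\varphi_4$ birational, a contradiction. Thus $1\le\xi\le\frac43$; since iterating Theorem~\ref{key}(1) by itself does not close the gap $1<\xi<\frac43$, the remaining work is geometric.

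The decisive step is to pass to a genus-two fibred surface. Assume $\xi>1$ (otherwise $\xi=1$, which lands in case (2) of Theorem~\ref{m1}) and $\varphi_4$ not birational. Let $F$ be a general member of the base-point-free pencil on $X'$ obtained (after further blow-ups if necessary) by pulling back a ruling of $\Sigma$; then $\iota:F\rw\bP^1$ is a genus-two fibration with general fibre $C$. Using the hyperplane class of the quadric --- on the minimal resolution $\mathbb F_2$ of $\bar{\mathbb F}_2$ it pulls back to $e+2f$, with $e$ the $(-2)$-section and $f$ a ruling fibre, resp.\ to $l_1+l_2$ on $\bP^1\times\bP^1$ --- one gets $M_1|_F\sim C$, hence $\pi^*(K_X)|_F\sim_{\bQ}C+E_1'|_F$; and since $F|_F\sim 0$, adjunction gives $K_F\ge\pi^*(K_X)|_F$, so $K_F-C\ge E_1'|_F\ge 0$ and $h^0(F,K_F-C)\ge 1$. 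Over the cone one moreover has $M_1\ge 2\tilde g^*(f)$, hence $\pi^*(K_X)\ge 2F$, so $p=2$ is available in Theorem~\ref{key} applied to $F$, whereas over the smooth quadric only $p=1$ is. I would then push $|4K_{X'}|$ down to $F$ using Theorem~\ref{extens}, obtaining $|4K_{X'}||_F\lsgeq|K_F+\roundup{2\pi^*(K_X)|_F}|$ up to effective corrections, and restrict to a general $C$ via Kawamata--Viehweg vanishing, getting on $C$ a subsystem $\lsgeq|K_C+D|$ with $\deg D\ge\roundup{2\xi}-(\text{a correction forced by the vertical part of }E_1'|_F)$. Because $2\xi>2$, this system is very ample on the genus-two curve $C$ --- which together with Lemma~\ref{sep} would force $\varphi_4$ birational --- unless that vertical correction is non-negligible; unravelling this last possibility forces, simultaneously, that $h^0(F,K_F-C)=1$ with the horizontal part of $|K_F-C|$ irreducible and reduced (Property~\ref{H-I}), that $p=2$ must be available, hence $\Sigma=\bar{\mathbb F}_2$ and $l$ the line through the vertex, and, via a further numerical analysis on $F$ using $p=2$, that $\xi=\frac43$ exactly (so $1<\xi<\frac43$ is excluded). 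The uniqueness of $\mathfrak C$ then follows as in Theorem~\ref{m1}, $C$ being intrinsically the general fibre of the relative genus-two map.

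The main obstacle, as expected, is this last step: converting ``$\varphi_4$ not birational'' plus ``$\xi>1$'' into the rigid list of conclusions. The delicate ingredients are (a) the precise bookkeeping of effective $\bQ$-divisors and of $\roundup{\cdot}$ in the combined Kawamata-extension / Kawamata--Viehweg argument, which is what detects the exact failure of very-ampleness of $|4K_{X'}|$ along a general genus-two fibre; (b) extracting the full content of Property~\ref{H-I} --- irreducibility and reducedness of the horizontal part of $|K_F-C|$, and $h^0=1$ --- from that failure rather than a mere inequality; and (c) ruling out the smooth quadric, for which the key input is that only over the cone does the pencil of surfaces over the ruling satisfy $\pi^*(K_X)\ge 2F$.
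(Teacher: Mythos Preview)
Your numerical opening is fine and matches the paper: $\beta=2$ forces $\deg\Sigma=2$, so $\Sigma$ is a quadric, and Theorem~\ref{key} pins $\xi$ into $[1,\tfrac43]$. After that, however, two steps in your sketch do not go through as written.

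\textbf{Ruling out $\bP^1\times\bP^1$.} Your reason is that ``only over the cone does $\pi^*(K_X)\ge 2F$, so $p=2$ is available''. But the unavailability of $p=2$ on the smooth quadric only means your lower-bound argument for $\xi$ gets weaker there; it does not produce a contradiction. What must be shown is that if $\Sigma=\bP^1\times\bP^1$ and $\xi>1$ then $\varphi_4$ \emph{is} birational. The paper does this by using both rulings at once: writing $S\ge F_1+F_2$ with $F_i=g^*(L_i)$ and applying vanishing to $|K_{X'}+\roundup{2\pi^*(K_X)}+F_1+F_2|$, the restriction to $F_1$ and then to $C$ yields $|K_C+\tilde D_1|$ with $\deg\tilde D_1\ge 2\xi>2$, forcing birationality. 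Your outline contains no substitute for this two-rulings trick.

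\textbf{The mechanism on $\hat F$.} Over the cone you do get $\pi^*(K_X)\ge 2\hat F$, and Corollary~\ref{ie} with $p=2$ then gives $\xi\ge\tfrac43$, hence $\xi=\tfrac43$; this part of your plan is essentially right. But your diagnosis of why $\varphi_4|_C$ may fail to be birational --- ``a correction forced by the vertical part of $E_1'|_F$'' --- is off: vertical components meet a general fibre $C$ trivially, so they contribute nothing. The actual obstruction sits in the \emph{horizontal} decomposition $\pi^*(K_X)|_{\hat F}=C_0+\sum_i k_iH_i+\hat E_v$ with $\sum_i k_i(H_i\cdot C)=\tfrac43$. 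Pushing $|4K_{X'}|$ to $\hat F$ via Kawamata--Viehweg (not the extension theorem) and then to $C$ yields $|K_C+D_{32}|$ with $D_{32}\sim\roundup{\sum_i\tfrac32 k_iH_i}|_C$, and non-birationality forces every $\tfrac32 k_i$ to be an integer. This integrality constraint leaves exactly four combinatorial possibilities for $(t,k_i,(H_i\cdot C))$; three of them are killed by a further vanishing argument on the curve $H_1$ itself (one checks $H^1(H_1,K_{H_1}+\roundup{\tfrac32\hat E_1|_{\hat F}}|_{H_1})=0$ and gains an extra $H_1|_C$ on $C$), and the sole survivor has a single reduced horizontal component with $(H_1\cdot C)=2$. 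Only then does a separate argument bounding $p_g(\hat F)$ yield Property~\ref{H-I}. Your proposal collapses all of this into ``unravelling this last possibility'', but the integrality lemma and the $H_1$-trick are the substance of the proof, and nothing in your sketch replaces them.
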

\begin{proof} {}First of all, if $\xi>\frac{4}{3}$, then $\varphi_{4,X}$ is birational since $$\alpha_4=(2-\frac{1}{2})\xi>2.$$ So we may and do assume $1<\xi\leq \frac{4}{3}$ from now on.

By Inequality (\ref{e4.1}), the image surface $\Sigma=\tilde{g}(X')\subset \bP^3$ has degree 2.
Classical surface theory (cf. Reid \cite[p30, Ex.19]{Reid}) says that $\Sigma$ must be either of the following surfaces:
\begin{itemize}
\item[(I)] $\Sigma=\bP^1\times\bP^1$.
\item[(II)] $\Sigma$ is the cone $\bar{\bF}_2$ obtained by blowing-down the unique $(-2)$ curve section on Hirzebruch surface $\bF_2$.
\end{itemize}

In both cases, $\Sigma$ is normal. Modulo further birational modifications, we may and do assume that $\Gamma$ dominates the minimal resolution of singularities (if any) of $\bar{\bF}_2$ (i.e. $\Gamma$ is over $\bF_2$ in the second case). By pulling back the hyperplane section of $\Sigma$ to $\Gamma$, we have a base point free divisor $H_{\Gamma}=s^*(\OO_{\Sigma}(1))$ so that $M_1\sim f^*(H_{\Gamma})$. We now analyze the structure of $H_{\Gamma}$ in details.
\medskip

{\bf Case (I).  $\Sigma=\bP^1\times\bP^1$  is impossible.}
We consider the morphism $g=s\circ f:X'\lrw \Sigma$. Since $\OO_{\Sigma}(1)\sim L_1+L_2$ with $(L_1\cdot L_2)=1$, the pull backs of $L_1$ and $L_2$ form two fiber structures on $X'$. Set $F_1=g^*(L_1)$ and $F_2=g^*(L_2)$. Then $S\geq F_1+F_2$. We see that both $F_1$ and $F_2$ are irreducible for general $L_1$ and $L_2$ since $h^0(X', S)=4$. 
Now the vanishing theorem gives
$$|K_{X'}+\roundup{2\pi^*(K_X)}+F_1+F_2||_{F_1}\lsgeq
|K_{F_1}+\roundup{2\pi^*(K_X)|_{F_1}}+C|$$ and
$$|K_{F_1}+\roundup{2\pi^*(K_X)|_{F_1}}+C||_C=|K_C+\tilde{D}_1|$$
with $\deg(\tilde{D}_1)\geq 2\xi>2$. This simply implies the birationality of $\varphi_{4,X}$ (a contradiction) and thus $\Sigma\neq \bP^1\times \bP^1$.
\medskip

{\bf Case (II). $\Sigma=\bar{\bF}_2$ implies $\xi=\frac{4}{3}$.} Denote by $\nu:\bF_2\rw \Sigma=\bar{\bF}_2$ the blow up at the singularity of $\bar{\bF}_2$. Denote $H_2=\nu^*(\OO_{\Sigma}(1))$. Then $h^0(\bF_2, H_2)=4$. Noting that
$H_2$ is a nef and big divisor on $\bF_2$, we can write
$$H_2\sim \mu G_0+nT$$ where $G_0$ is the unique section of the ruling structure with $G_0^2=-2$, $T$ is the general fiber of the ruling of $\bF_2$, $\mu$ and $n$ are integers.
Necessarily we get $n=2$ and $\mu=1$.  Let $\theta_0:\bF_2\rw \bP^1$ be the $\bP^1$-bundle fibration and $\eta_2: \Gamma\rw \bF_2$ the birational morphism. Let $f_0:X'\lrw \bP^1$ be the composition, i.e. $f_0=\theta_0\circ\eta_2\circ f$. Let $\hat{F}$ be a general fiber of $f_0$. 

\begin{lem}\label{4/3} If $\Sigma=\bar{\bF}_2$ and $\varphi_{4,X}$ is not birational, then $\xi=\frac{4}{3}$.
\end{lem}
\begin{proof} Clearly, we see $S\sim M_1\sim 2\hat{F}+
N_0$ where $N_0=f^*\eta_2^*(G_0)$.  Observing that $|S|_S|$ is composed of a pencil of curves and $$\hat{F}\cap S\equiv(\eta_2\circ f)^*(H_2\cap T),$$ we have
$S|_{\hat{F}}\sim N_0|_{\hat{F}}\sim C$. Since $K_{\hat{F}}\geq S|_{\hat{F}}$, we see $p_g(\hat{F})\geq 2$.
Denote by $\hat{\sigma}:\hat{F}\rw \hat{F}_0$ the contraction onto the minimal model. Since $(\hat{\sigma}^*(K_{\hat{F}_0})\cdot C)=\xi>1$, we see that $\hat{F}_0$ is not a $(1,2)$ surface. We may write $\pi^*(K_X)\sim 2\hat{F}+\hat{E}_1$ for some effective $\bQ$-divisor $\hat{E}_1$ on $X'$.

Consider the pencil $|2\hat{F}|\lsleq|K_{X'}|$ and the morphism $\Phi_{|2\hat{F}|}$. Clearly $f_0$ is the induced fibration of $\Phi_{|2\hat{F}|}$. Since $K_{X'}\geq 2\hatF$, the relation (\ref{mp}) in the proof of Crollary \ref{ie} implies $\pi^*(K_X)|_{\hat{F}}\geq \frac{2}{3}\hat{\sigma}^*(K_{\hat{F}_0})$ and, for a smooth fiber $C$ of $f$ contained in a general surface $\hatF$,
$$\xi=\xi_{\hatF}=(\pi^*(K_X)|_{\hatF}\cdot C)\geq
\frac{2}{3}(\hat{\sigma}^*(K_{\hat{F}_0})\cdot C)\geq \frac{4}{3}$$
as $\hatF_0$ is not a $(1,2)$ surface.
As we mentioned at the very beginning, we get $\xi=\frac{4}{3}$. 
\end{proof}

Next, we shall analyze this very special case more explicitly as follows.

Since the Kawamata-Viehweg vanishing theorem implies $$H^1(X', K_{X'}+\roundup{3\pi^*(K_X)-\hatF-\frac{1}{2}\hat{E}_1})=0,$$ we have the surjective map
\begin{eqnarray}\label{XX}
&&H^0(X', K_{X'}+\roundup{3\pi^*(K_X)-\frac{1}{2}\hat{E}_1})\cr
&\lrw& H^0(\hatF, K_{\hatF}+\roundup{3\pi^*(K_X)-\hatF-\frac{1}{2}\hat{E}_1}|_{\hatF}).
\end{eqnarray}
Notice that
\begin{equation*}
\roundup{3\pi^*(K_X)-\hatF-\frac{1}{2}\hat{E}_1}|_{\hatF}\geq\roundup{(3\pi^*(K_X)-\hatF-\frac{1}{2}\hat{E}_1)|_{\hatF}}.
\end{equation*}
Define $Q_{52}=(3\pi^*(K_X)-\hatF-\frac{1}{2}\hat{E}_1)|_{\hatF}$.
Then the birationality of $\varphi_{4,X}$ follows from that 
of $\Phi_{|K_{\hatF}+\roundup{Q_{52}}|}$.
By definition, we have $$Q_{52}=\frac{5}{2}\hatE_1|_{\hatF}=\frac{5}{2}\pi^*(K_X)|_{\hatF}$$ since $\hatF|_{\hatF}$ is trivial. Denote by $\iota=\eta_2\circ f|_{\hatF}$. Then $\iota: \hatF\lrw T\cong \bP^1$ is a fibration whose general fibre $C$ is equivalent to  $S\cap \hatF$.  Since $\pi^*(K_X)\geq S$, we may write
$$\pi^*(K_X)|_{\hatF}=\hatE_1|_{\hatF}=C_0+\sum_{i=1}^tk_iH_i+\hatE_v$$
where $C_0\sim C$, $k_i\in \bQ^+$, $H_i$ is horizontal with respect to $\iota$ for each $i$ and  $\hatE_v$ is an $\iota$-vertical effective $\bQ$-divisor on $\hatF$. Clearly we have $\sum_i k_i(H_i\cdot C)=\frac{4}{3}$.
Since $H^1(\hatF, K_{\hatF}+\roundup{\frac{3}{2}\hatE_1|_{\hatF}})=0$ by the Kawamata-Viehweg vanishing theorem, we have the surjective map:
\begin{equation} H^0(\hatF, K_{\hatF}+\roundup{\frac{3}{2}\hatE_1|_{\hatF}}+C_0)\lrw H^0(C, K_C+D_{32})\label{F2}\end{equation}
where $D_{32}=(\roundup{\frac{3}{2}\hatE_1|_{\hatF}}+C_0-C)|_C\sim\roundup{\sum_{i=1}^t(\frac{3}{2}k_i)H_i}|_C$.

\begin{lem}\label{integral} Under the above situation, $\frac{3}{2}k_i$ is an integer for each $i=1,\ldots, t$.
\end{lem}
\begin{proof} Assume, to the contrary, that $\frac{3}{2}k_i$ is not an integer for certain $i$. Then we have $\deg(D_{32})>\frac{3}{2}\xi=2$, which means $\varphi_{4, X'}|_C$ is birational. The birationality principle implies that $\varphi_4$ is birational, a contradiction.
\end{proof}

Lemma \ref{integral} implies that one of the following situations occurs:
\begin{enumerate}
\item[(i).] $t=2$, $k_1=k_2=\frac{2}{3}$ and $(H_1\cdot C_0)=(H_2\cdot C_0)=1$, $H_1\neq H_2$;
\item[(ii).] $t=1$, $k_1=\frac{4}{3}$ and $(H_1\cdot C_0)=1$;
\item[(iii).] $t=1$, $k_1=\frac{2}{3}$ and $(H_1\cdot C_0)=(H_1\cdot C_{0,\text{red}})=2$;
\item[(iv).] $t=1$, $k_1=\frac{2}{3}$, $(H_1\cdot C_{0,\text{red}})=1$ and $(H_1\cdot C_0)=2$.
\end{enumerate}
Here we denote by $C_{0,\text{red}}$ the reduced part of $C_0$. 

\begin{lem}\label{2A_0} None of the cases (i), (ii) and (iii) is possible. 
\end{lem}
\begin{proof} On the irreducible curve $H_1$, we study the divisor 
$D_{H_1}=\roundup{\frac{3}{2}\hatE_1|_{\hatF}}|_{H_1}$.
One has
\begin{eqnarray*}
\deg(D_{H_1})&\geq& \frac{3}{2}(\hatE_1|_{\hatF}\cdot H_1)+\big((\roundup{\frac{3}{2}C_0}-\frac{3}{2}C_0)\cdot H_1\big)\\
&\geq & \big((\roundup{\frac{3}{2}C_0}-\frac{3}{2}C_0)\cdot H_1\big)
\end{eqnarray*}
since $\hatE_1|_{\hatF}\sim\pi^*(K_X)|_{\hatF}$ is nef. Clearly, for cases (i), (ii) and (iii), we have $\deg(D_{H_1})>0$. Hence 
$H^1(H_1, K_{H_1}+D_{H_1})=0$. Since $$H^1({\hatF}, K_{\hatF}+\roundup{\frac{3}{2}\hatE_1|_{\hatF}})=0$$ by vanishing theorem, we have $$H^1(\hatF, K_{\hatF}+\roundup{\frac{3}{2}\hatE_1|_{\hatF}}+H_1)=0.$$
This implies that we have the surjective map
$$H^0({\hatF}, K_{\hatF}+\roundup{\frac{3}{2}\hatE_1|_{\hatF}}+C_0+H_1)
\lrw H^0(C,  K_C+\hat{D})$$
where $\hat{D}=(\roundup{\frac{3}{2}\hatE_1|_{\hatF}}+C_0-C+H_1)|_C$ with
$$\deg(\hat{D})\geq \frac{3}{2}\xi+(H_1\cdot C)\geq 3$$
for the general curve $C\sim C_0$. So $|K_{\hatF}+\roundup{\frac{3}{2}\hatE_1|_{\hatF}}+C_0+H_1|$ gives a birational map. By Relation (\ref{XX}), we see that $\varphi_{4,X'}$ is birational (a contradiction).
\end{proof}



We have proved that only (iv) is possible. 

\begin{lem}\label{Y} In case (iv), $\hat{F}$ is C-horizontally integral.
\end{lem}
\begin{proof} In case (iv), $H_1$ is clearly the unique $\iota$-horizontal component in $\pi^*(K_X)|_{\hatF}$.
{}First we consider the case that $|K_{\hatF}|$ is composed with a pencil. Then $\iota:\hatF\rw \bP^1$ must be the induced fibration from $\Phi_{|K_{\hatF}|}$. Assume $p_g(\hatF)\geq 3$. Then we have $\sigma^*(K_{\hatF_0})\sim 2C+E_0$ for an effective divisor $E_0$. Then we get
$$\pi^*(K_X)|_{\hatF}\sim \frac{4}{3}C+\tilde{E}_1$$
for certain effective $\bQ$-divisor $\tilde{E}_1$. Since
$$Q_{52}-C-\frac{3}{4}\tilde{E}_1\equiv \frac{7}{4}\pi^*(K_X)|_{\hatF}$$
is nef and big, the vanishing theorem again implies:
$$|K_{\hatF}+\roundup{Q_{52}-\frac{3}{4}\tilde{E}_1}||_C=|K_C+\tilde{D}|$$
where $\tilde{D}=\roundup{Q_{52}-C-\frac{3}{4}\tilde{E}_1}|_C$ and
$\deg(\tilde{D})\geq \frac{7}{4}\xi>2$, which means $\varphi_{4,X'}$ is birational (a contradiction). Thus $p_g(\hatF)=2$. Clearly we have $h^0(K_{\hatF}-C)=1$.

Next, we consider the case that $|K_{\hatF}|$ is not composed of a pencil. Let us assume $p_g(\hatF)\geq 4$. Modulo further birational modifications, we may and do assume that the moving part $|\tilde{C}|$ of $|K_{\hatF}|$ is base point free. Pick a general curve $\tilde{C}$. Then $$\sigma^*(K_{\hatF_0})^2\geq \tilde{C}^2\geq 2p_g(\hatF)-4\geq 4$$ and
$$(\pi^*(K_X)|_{\hatF}\cdot \tilde{C})\geq \frac{2}{3}\sqrt{\sigma^*(K_{\hatF_0})^2\cdot \tilde{C}^2}\geq \frac{8}{3}.$$
Write $\pi^*(K_X)|_{\hatF}\sim \frac{2}{3}\tilde{C}+\hatE_{00}$ for an effective $\bQ$-divisor $\hatE_{00}$ on $F$.
Since
$$Q_{52}-\tilde{C}-\frac{3}{2}\hatE_{00}\equiv \pi^*(K_X)|_{\hatF}$$
is nef and big, the vanishing theorem again implies:
$$|K_{\hatF}+\roundup{Q_{52}-\frac{3}{2}\hatE_{00}}||_{\tilde{C}}=|K_{\tilde{C}}+\tilde{D}_0|$$
where $\tilde{D}_0=\roundup{Q_{52}-\tilde{C}-\frac{3}{2}\hatE_{00}}|_{\tilde{C}}$ and
$\deg(\tilde{D}_0)\geq \frac{8}{3}>2$, which means $\varphi_{4,X'}$ is birational (a contradiction). Thus $p_g(\hatF)=3$. Let us consider the exact sequence:
$$0\rw H^0(\hatF, K_{\hatF}-C)\rw H^0(\hatF, K_{\hatF})\overset{j}\rw H^0(C, K_C)\rw\cdots$$
Clearly, since $\dim\text{Im}(j)=2$, we have $h^0(\hatF, K_{\hatF}-C)=1$. 
In both cases, we have $C\leq \pi^*(K_X)|_{\hatF}\leq K_{\hatF}$. Since the horizontal part of $\pi^*(K_X)|_{\hatF}$ is $\frac{2}{3}H_1$ and $(H_1\cdot C)=2$, $K_{\hatF}$ has the {\it unique} irreducible and reduced horizontal part $H_1$. In a word, we have shown that $\hatF$ is $C$-horizontally integral.
\end{proof}
We have proved Proposition \ref{g2}. 
\end{proof}

\begin{prop}\label{2anti} Assume $g(C)=2$, $\beta=2$, $\xi=\frac{4}{3}$, $\deg\tilde{g}(X')=2$, $\tilde{g}(X')=\bar{\mathbb F}_2$ and the general surface $\hatF$ on $X'$ in the family induced from the ruling of $\bar{\mathbb F}_2$ is $C$-horizontally integral.
Then $\varphi_{4,X}$ is not birational.
\end{prop}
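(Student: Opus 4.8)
The plan is to show that $\Phi_{|4K_{X'}|}$, restricted to a general member $C$ of the family $\mathfrak{C}$ --- concretely, a general fibre of $\iota:\hat{F}\rw\bP^1$, which is also a general fibre of $f:X'\rw\Gamma$ --- factors through the hyperelliptic double cover $h:C\rw\bP^1$. Since these curves $C$ sweep out $X'$, this is enough: for a general point $x\in X'$ the map $\varphi_4$ then identifies $x$ with its $h$-conjugate on the curve $C$ through $x$ (or contracts $C$), so $\varphi_{4,X}$ cannot be birational.

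\emph{Step 1: the unique horizontal curve.} Keep the notation of the proof of Proposition \ref{g2}, so that $\pi^*(K_X)|_{\hat{F}}=C_0+\sum_i k_iH_i+\hat{E}_v$ with $C_0\sim C$ being $\iota$-vertical, the $H_i$ being $\iota$-horizontal, $\hat{E}_v$ being $\iota$-vertical, and $\sum_i k_i(H_i\cdot C)=\xi=\tfrac{4}{3}$. Since $\hat{F}$ is $C$-horizontally integral, $h^0(\hat{F},K_{\hat{F}}-C)=1$; let $D_0=H_1+D_v$ be the unique effective divisor in $|K_{\hat{F}}-C|$, with $H_1$ its horizontal part (irreducible and reduced, by hypothesis) and $D_v$ its vertical part. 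Using $K_{X'}|_{\hat{F}}=K_{\hat{F}}$ and $C_0\sim C$, the effective divisor $K_{\hat{F}}-C_0=(\pi^*(K_X)|_{\hat{F}}-C_0)+E_{\pi}|_{\hat{F}}$ is linearly equivalent to $K_{\hat{F}}-C$, hence equals $D_0$; comparing horizontal parts gives $\sum_i k_iH_i\le H_1$, so there is a single horizontal component $H_1$. Restricting $D_0$ to a general $C$ kills $D_v$, and adjunction (with $C^2=0$) gives $H_1|_C=D_0|_C\sim K_C$. In particular $(H_1\cdot C)=\deg K_C=2$, so $k_1=\tfrac{2}{3}$, and $H_1\cap C$ is a divisor of the hyperelliptic pencil $|K_C|$.

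\emph{Step 2: reading off $|4K_{X'}|$ along $C$.} As in the estimates used repeatedly in the paper, a suitable general member $M_4$ of $\operatorname{Mov}|4K_{X'}|$ satisfies $4\pi^*(K_X)\ge M_4$, so on a general $\hat{F}$ one has $M_4|_{\hat{F}}\le 4\pi^*(K_X)|_{\hat{F}}=4C_0+\tfrac{8}{3}H_1+4\hat{E}_v$. The only horizontal curve in the support of the right-hand side is $H_1$, so the horizontal part of $M_4|_{\hat{F}}$ is $m_4H_1$ for some non-negative integer $m_4\le\tfrac{8}{3}$, i.e. $m_4\in\{0,1,2\}$; the vertical part of $M_4|_{\hat{F}}$ lies on finitely many fibres of $\iota$ and misses a general $C$. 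Therefore $M_4|_C=m_4(H_1\cap C)\sim m_4K_C$, so $\operatorname{Mov}(|4K_{X'}||_C)$ is a sub-linear-system of $|m_4K_C|$ with $m_4\le 2$. On the genus-$2$ curve $C$ one has $|K_C|=h^*|\OO_{\bP^1}(1)|$ and $|2K_C|=h^*|\OO_{\bP^1}(2)|$, so every sub-linear-system of $|m_4K_C|$ with $m_4\le 2$ is composed with $h$ --- or trivial when $m_4=0$. Hence $\Phi_{|4K_{X'}|}|_C$ factors through $h$ (or contracts $C$), and in either case is not birational; as explained at the outset, $\varphi_{4,X}$ is then not birational.

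\emph{On the main obstacle.} The crux is Step 1: one must extract, from the hypothesis ``$\hat{F}$ is $C$-horizontally integral'' alone --- and in particular \emph{without} assuming $\varphi_{4,X}$ non-birational, so that Lemma \ref{integral} and the case analysis following it are not available --- that $\pi^*(K_X)|_{\hat{F}}$ carries a single horizontal curve $H_1$ with $(H_1\cdot C)=2$ and that $H_1\cap C$ is a hyperelliptic conjugate pair. One must also check the bookkeeping: that $C_0$ and $\hat{E}_v$ are genuinely $\iota$-vertical, supported on finitely many fibres (so that restriction to a general $C$ annihilates them), and that $H_1$ meets a general $C$ in two distinct points --- both hold because $C$ is a general, smooth fibre lying over a non-branch point of $\iota|_{H_1}$. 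The remaining steps use only the standard ``$4\pi^*(K_X)\ge M_4$'' estimate together with elementary linear-system geometry on a hyperelliptic curve.
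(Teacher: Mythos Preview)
Your proof is correct and follows essentially the same route as the paper: identify the unique horizontal component $H_1$ with $(H_1\cdot C)=2$ and coefficient $\tfrac{2}{3}$ in $\pi^*(K_X)|_{\hat F}$, bound the horizontal part of $M_4|_{\hat F}$ by $2H_1$, and conclude that $\varphi_4|_C$ factors through the hyperelliptic double cover. Your Step~1 supplies, via $K_{\hat F}-C_0=D_0$ and the adjunction $D_0|_C\sim K_C$, a cleaner justification of $(H_1\cdot C)=2$ than the paper (which states it without argument), while you omit the paper's extra lower bound $(M_4\cdot C)\geq 4$ from Relation~(\ref{F2}) that pins down $\deg\varphi_4=2$ exactly---but that refinement is not needed for the proposition as stated.
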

\begin{proof} Naturally we are in Case (II) in the proof of Proposition \ref{g2}. We keep the same setting as there. Pick a general fiber $\hatF$ of $\iota$. Since $\hatF$ is $C$-horizontally integral and $\pi^*(K_X)|_{\hatF}=\hatE_1|_{\hatF}\leq K_{\hatF}$, the $C$-horizontal part of $\hatE_1|_{\hatF}$ is irreducible and reduced. Thus
the horizontal part of $\pi^*(K_X)|_{\hatF}$ is exactly $\frac{2}{3}H_1$ with $(H_1\cdot C)=2$.

Since, for a general fiber $\hatF$ of $f_0$,  we have
\begin{eqnarray*}
M_4|_{\hatF}&\leq& \rounddown{4\pi^*(K_X)|_{\hatF}}\\
&=&\rounddown{\frac{8}{3}H_1+(\text{vertical divisors})}\\
&=&2H_1+(\text{vertical divisors with respect to }\iota).
\end{eqnarray*}
Thus, for a general fiber $C$ of $\iota$, $(M_4\cdot C)\leq 4$. Note that $H_1|_C$ gives a $g_2^1$ of the involution on $C$. Thus $|2H_1|_C|$ gives a finite map of degree 2.
On the other hand, Relation (\ref{F2}) implies $(M_4\cdot C)\geq 4$ and $|M_4||_C$ is base point free since $\deg(D_{32})\geq 2$. This simply implies that $\varphi_4|_C$ is finite of degree 2. Thus $\varphi_{4,X}$ is generically finite of degree 2. In particular, $\varphi_{4,X}$ is not birational.
\end{proof}

\begin{prop}\label{anti} Assume $g(C)=2$. If either $\xi=1$ or $\xi=\frac{6}{5}$ and $\deg \tilde{g}(X')=3$, then $\varphi_{4,X}$ is not birational.
\end{prop}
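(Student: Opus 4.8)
The plan is to prove non-birationality after restricting to a general fibre of the curve fibration. By Lemma \ref{sep}, $|4K_{X'}|$ distinguishes different generic irreducible elements $C$ of $|G|$; hence, if $\varphi_4$ restricted to a general such $C$ fails to be birational onto its image, then $\varphi_4$ itself is not birational. So the whole problem reduces to studying $\varphi_4|_C$, where $g(C)=2$.

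First I would fix the geometry forced by the hypotheses. Writing $\pi^*(K_X)|_S\equiv \beta C+E'_{1,S}$, one has $\beta\ge 2$ always, and $\beta\ge 3$ when $\deg\tilde{g}(X')=3$ by Inequality (\ref{e4.1}). Since $\xi=\beta C^2+(E'_{1,S}\cdot C)\le\frac{6}{5}$ with $C^2\ge 0$ and $(E'_{1,S}\cdot C)\ge 0$, this forces $C^2=0$ in both cases, so $|G|$ is composed with the pencil induced by $f$ and $C$ is a general fibre of $f:X'\to\Gamma$. Being a smooth complete intersection of codimension two in $X'$ with trivial normal bundle, such a $C$ satisfies $K_C\sim K_{X'}|_C$ by adjunction, so $4K_{X'}|_C\sim 4K_C$ has degree $8$. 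Next, from $M_4\le 4\pi^*(K_X)$ one gets $(M_4\cdot C)\le 4\xi\le\frac{24}{5}$, so the integer $(M_4\cdot C)$ is at most $4$; equivalently, the moving part of the restricted linear system $|4K_{X'}||_C$ has degree $\le 4$ on $C$.

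Next I would record the relevant curve theory. On a curve of genus $2$, every moving linear system of degree $\le 3$ defines a non-birational map (its image is a rational curve), and a moving linear system of degree $4$ defines a birational map only if it is the complete, base-point-free $g^2_4$ whose divisor class is not linearly equivalent to $2K_C$ -- indeed every divisor of degree $4$ on $C$ has $h^0=3$, and if the class is $\sim 2K_C$ the associated map is the double cover of a conic. So, arguing by contradiction, if $\varphi_4$ were birational then $\varphi_4|_C$ would be birational, hence the moving part of $|4K_{X'}||_C$ would be a complete $g^2_4$ of class $D\not\sim 2K_C$; in particular $(M_4\cdot C)=4$, and $M_4|_C\le 4\pi^*(K_X)|_C$ then constrains $M_4|_C$ in terms of $K_C$ and $E_\pi|_C$.

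The hard part, which I expect to be the main obstacle, is to contradict $D\not\sim 2K_C$. This should be carried out by combining the Kawamata--Viehweg vanishing theorem -- used to determine exactly which sections of $4K_{X'}$ extend from $X'$ and thereby to control the dimension and class of the restricted system $|4K_{X'}||_C$ -- with the explicit geometry of the two cases: when $\xi=1$ the canonical map contracts $C$ and $X$ carries a genus-$2$ family of canonical degree $1$ (item (2) of Theorem \ref{m1}), and when $\xi=\frac{6}{5}$ the base $\Sigma=\tilde{g}(X')$ is a cubic surface in $\bP^3$ (item (3)). In either case one shows that either the restriction map $H^0(X',4K_{X'})\to H^0(C,4K_C)$ has image of dimension $\le 2$, so that $\varphi_4|_C$ maps $C$ onto a rational curve and is non-birational, or the class of $M_4|_C$ is forced to be $\sim 2K_C$, so that $\varphi_4|_C$ factors through the hyperelliptic double cover $C\to\bP^1$ and is again non-birational. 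Either way $\varphi_4$ is not birational, which completes the proof; the argument is uniform in the two cases once $C^2=0$ and $(M_4\cdot C)\le 4$ are in hand, the only difference being the ambient geometry used in this last step.
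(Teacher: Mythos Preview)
Your reduction is correct and matches the paper: by Lemma \ref{sep} it suffices to show $\varphi_4|_C$ is not birational, and your computations $C^2=0$, $K_C\sim K_{X'}|_C$, $(M_4\cdot C)\le 4$, together with the genus-$2$ curve theory (the map is birational only if $M_4|_C$ is a complete $g^2_4$ not linearly equivalent to $2K_C$) are all fine. The paper proceeds in exactly this way.

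The genuine gap is that you do not prove the ``hard part''; you only flag it. Your suggestion --- control the image of restriction by vanishing, and exploit the cubic-versus-cone geometry of $\Sigma$ --- does not point to the mechanism that actually works, and the first branch of your dichotomy (image of dimension $\le 2$) in fact never occurs: the vanishing argument in the paper's Lemma \ref{deg2} already gives $h^0$ of the restricted system $\ge 3$. What the paper does is this. Using Kawamata--Viehweg vanishing twice (surjections (\ref{s1}) and (\ref{s2})), one identifies $|M_4||_C$ with $|K_C+\tilde{D}|$ for the \emph{explicit} divisor $\tilde{D}=\roundup{(2-\tfrac{1}{\beta})E_1'|_S}\big|_C$, and the sandwich $5>4\xi\ge (M_4\cdot C)\ge \deg(K_C+\tilde D)\ge 4$ forces $\deg\tilde D=2$. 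Then, from $K_C\sim K_{X'}|_C$ and $K_{X'}=M_1+\pi^*(Z)+E_1+E_\pi$ with $M_1|_C\sim 0$, one gets the adjunction identity
\[
K_C\ \sim\ (\pi^*(Z)|_S+E_1|_S+E_\pi|_S)\big|_C,
\]
and since $(E_\pi\cdot C)=2-\xi\in(0,1]$, the support of $(E_\pi|_S)|_C$ is one or two points. A short case analysis using $\text{Supp}(E_1)\subset\text{Supp}(E_\pi)$ then pins down $\tilde D$ and shows $\tilde D\sim K_C$ in every case, so $\varphi_4|_C=\Phi_{|2K_C|}$. Note that this argument is uniform in $t=\xi\in\{1,\tfrac{6}{5}\}$; the ambient distinction between a cubic and a quadric cone, which you propose to use, plays no role at all.

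In short: your outline is the right one, but the decisive step --- identifying the class of $M_4|_C$ as $2K_C$ --- requires an explicit description of $\tilde D$ via vanishing and a pointwise analysis of the exceptional contributions on $C$, none of which appears in your proposal.
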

\begin{proof}  The proof is similar in the spirit to that of Chen-Zhang \cite[Proposition 4.6]{MZ}.

{\bf 0. Notation.} Recall that we have $K_{X'}=\pi^*(K_X)+E_{\pi}$. On $X$, we set
$Z=\pi_*(Z_1)$ and $N=\pi_*(M_1)$. Clearly $K_X\sim N+Z$. Then
there is an effective $\mathbb{Q}$-divisor $E_1$, which is
supported by some exceptional divisors, such that
$\pi^*(N)=M_1+E_1$. Therefore $E_1'=\pi^*(Z)+E_1$. For a general
member $S$ of $|M_1|$, we have
$$K_{X'}|_S=\pi^*(K_X)|_S+E_{\pi}|_S=({M_1}|_S+E_1'|_S)+E_{\pi}|_S.$$
One knows that $E_{\pi}$ is
composed of all those exceptional divisors of $\pi$. Also it is clear that
$\text{Supp}(E_1)\subseteq \text{Supp}(E_{\pi})$.

{\bf 1. Further modifications to $\pi$}.  We may
take $\pi$ to be the composition of $\pi_0$, $\pi_1$ and $\pi_2$, say
$$X'\overset{\pi_2}\longrightarrow
X_2\overset{\pi_1}\longrightarrow
X_1\overset{\pi_0}\longrightarrow X$$
 where $\pi_0$ is the
resolution of the indeterminancy of the moving part of $|K_{X}|$, $\pi_1$ is the
resolution of those isolated singularities on $X_1$ which are
away from all exceptional locus of $\pi_0$,  and finally $\pi_2$ is the
minimal further modification such that $\pi^*(K_1)$ has simple normal crossing support (recall here that $K_1\sim
K_X$ is a fixed Weil divisor as in \ref{setup}). Set
$\pi_3=\pi_0\circ \pi_1$. By abuse of notations we will have a
set of divisors for $\pi_3$ similar to that for $\pi$. For example we
may write $K_{X_2}=\pi_{3}^*(K_X)+E_{\pi_3}$ where $E_{\pi_3}$
is an effective ${\mathbb Q}$-divisor. The moving part
$|M_{\pi_3}|$ of $|K_{X_2}|$ is already base point free. Write
$\pi_3^*(N)=M_{\pi_3}+E_{1, \pi_3}$ and
$$\pi_3^*(K_X)=M_{\pi_3}+E_{1,\pi_3}'$$ where $E_{1, \pi_3}$ and
$E_{1, \pi_3}'$ are both effective ${\mathbb Q}$-divisors.
Clearly $$E_{1, \pi_3}'=\pi_3^*(Z)+E_{1, \pi_3}.$$ By the
definition of $\pi_3$, $E_{\pi_3}$ is the sum of two parts
$E_{\pi_3}'+E_{\pi_3}''$ where $E_{\pi_3}'$ consists of all those
components over the indeterminancy of $\varphi_1$ while
$E_{\pi_3}''$ is totally disjoint from $E_{\pi_3}'$. Denote by
$S_{\pi_3}$ a general member of $|M_{\pi_3}|$. Then
$|M_{\pi_3}|_{S_{\pi_3}}|$ is a free pencil of genus 2 with a
general member $C_{\pi_3}$. As we have seen
$\text{Supp}(E_{\pi_3}''|_{S_{\pi_3}})=0$ and so
$$\text{Supp}(E_{\pi_3}|_{S_{\pi_3}})=
\text{Supp}(E_{1,\pi_3}|_{S_{\pi_3}}).$$

Now we have
$$t=(\pi^*(K_X)\cdot C)=(\pi_3^*(K_X)\cdot
{\pi_2}_*(C))=(\pi_3^*(K_X)\cdot C_{\pi_3}),$$ where $t=1$ or $\frac{6}{5}$.  Since $$2 =
\deg(K_{C_{\pi_3}})= (\pi_3^* (K_X) + E_{\pi_3}) \cdot C_{\pi_3}$$
and $(\pi_3^* (K_X)\cdot C_{\pi_3})=(\pi_3^*(K_X)|_{S_{\pi_3}}\cdot C_{\pi_3})=t$, we get
$$(E_{\pi_3}|_{S_{\pi_3}}\cdot C_{\pi_3})=(E_{\pi_3}\cdot
C_{\pi_3})=2-t>0.$$ Therefore $(E_{1,\pi_3}\cdot C_{\pi_3})=(E_{1,\pi_3}|_{S_{\pi_3}}\cdot C_{\pi_3})>0$. Noting
that $$\pi_2^*(E_{1,\pi_3})\leq E_{1},$$ one has
\begin{equation}\label{positive}(E_{1}|_S\cdot C)\geq (\pi_2^*(E_{1,\pi_3})|_S\cdot
C)=(E_{1,\pi_3}|_{S_{\pi_3}}\cdot C_{\pi_3})>0 .\end{equation}

{\bf 2. Main part of the proof}. As we have known, $\varphi_{4,X}$ is birational if and only if ${\varphi_4}|_S$ is
birational for the general $S$. Now on the general surface $S$, we
have a pencil $|M_1|_S|$ and ${\varphi_4}|_S$ distinguishes different
generic irreducible elements of $|M_1|_S|$. So ${\varphi_4}|_S$ is birational if and
only if ${\varphi_4}|_C$ is bira tional. We will show that
${\varphi_4}|_C=\varphi_{|2K_C|}$, which is, however, not
birational.

\begin{lem}\label{deg2} Under the assumption of Proposition \ref{anti},  $\deg(\varphi_4)\leq 2$.
\end{lem} 
\begin{proof} 
 By the Kawamata-Viehweg vanishing theorem, we have the
surjective map:
\begin{equation}\label{s1} H^0(X', K_{X'}+\roundup{2\pi^*(K_X)}+S)\longrightarrow H^0(S,
K_S+\roundup{2\pi^*(K_X)}|_S).\end{equation}
 Since
$$2\pi^*(K_X)|_S-C-\frac{1}{\beta}E_1'|_S\equiv
(2-\frac{1}{\beta})\pi^*(K_X)|_S$$ is nef and big, the vanishing
theorem gives the surjective map:
\begin{equation}\label{s2}H^0(S,K_S+\roundup{2\pi^*(K_X)|_S-\frac{1}{\beta}E_1'|_S})\longrightarrow
H^0(C, K_C+\tilde{D}),\end{equation} where
$$\tilde{D}=\roundup{2\pi^*(K_X)|_S-\frac{1}{\beta}E_1'|_S}|_C=\roundup{(2-\frac{1}{\beta})E_1'|_S}|_C$$
and $\deg(\tilde{D})\ge (2-\frac{1}{\beta})\xi\geq 2-\frac{1}{2}>1$, noting that
$$(E_1' \cdot C) = (\pi^*(K_X) \cdot C) = \xi = t.$$ So $|K_C+\tilde{D}|$ is base
point free. Denote by $M_4'$, $N_4'$ the moving parts of
$|K_{X'}+\roundup{2\pi^*(K_X)}+S|$,
$|K_S+\roundup{2\pi^*(K_X)|_S-\frac{1}{\beta}E_1'|_S}|$ respectively.
Then one has
$$4\pi^*(K_X)|_C\geq M_4|_C\ge (M_4'|_S)|_C\ge N_4'|_C\ge K_C+\tilde{D}.$$
So $$5>4t=4\pi^*(K_X)|_S\cdot C \ge M_4\cdot C=\deg(K_C+\tilde{D})\geq 4.$$ This means
${M_4}|_C\sim K_C+\tilde{D}$ and $\deg(\tilde{D}) = 2$. On the other hand, we have shown
$|M_4||_C\lsgeq |K_C+\tilde{D}|$.  Clearly $|M_4||_C=|K_C+\tilde{D}|$.
Since $\deg(\Phi_{|K_C|}) = 2$, we have
$\deg(\varphi_4) \le 2$. So $\varphi_4$ is either
birational or a generically double cover.
\end{proof}

\begin{lem}\label{nnn} Under the assumption of Proposition \ref{anti},
 $\deg(\varphi_4)> 1$.
\end{lem} 
\begin{proof} 
  We have:
\begin{equation} \label{s3} K_C\sim (K_{X'}|_S + S|_S)|_C=(\pi^*(Z)|_S|_C+(E_1|_S)|_C+(E_{\pi}|_S)|_C.
\end{equation}
Since $2 = \deg(K_C)= (\pi^* K_X + E_{\pi})
\cdot C$ and $$(E_1'|_S\cdot C)=(\pi^*(K_X)|_S\cdot C)=t,$$ we get
$(E_{\pi}|_S\cdot C)=(E_{\pi}\cdot C)=2-t>0$. As a sub-divisor of $K_C$, $(E_{\pi}|_S)|_C$ has its support
$\text{Supp}(E_{\pi}|_S)|_C$ be one of the following situations:

{\it Case A.} a single point $P$;

{\it Case B.} two different points $P$ and $Q$ on $C$.

We consider {\it Case A} and {\it Case B} separately and note
that $$E_1'|_S=\pi^*(Z)|_S+E_1|_S$$ and \text{Supp}($E_1|_S$)
$\subset$ \text{Supp}($E_{\pi}|_S$).

Suppose we are in {\it Case A}. Then $(E_{\pi}|_S)|_C=(2-t)P$. First, if
$\text{Supp}((\pi^*(Z)|_S)|_C+(E_1|_S)|_C)$ contains a point other
than $P$ (say a point R), then
$$({\pi^*(Z)|_S})|_C+(E_1|_S)|_C+(E_{\pi}|_S)|_C=P+R$$ and $R$ is not
contained in \text{Supp}$(E_1|_C)$ since, otherwise, $R$ is in
\text{Supp}($E_{\pi}|_C$), a contradiction. Thus $R\leq
(\pi^*(Z)|_S)|_C$ as an integral part because
$$(\pi^*(Z)|_S)|_C+(E_1|_S)|_C+(E_{\pi}|_S)|_C$$ is an integral
divisor. This says $\tilde{D}=P+R\sim K_C$.  If
$$\text{Supp}((\pi^*(Z)|_S+E_1|_S)|_C$$ only contains a single point,
$(\pi^*(Z)|_S)|_C+(E_1|_S)|_C=tP$ and $K_C\sim 2P$.
In this case, we have $\tilde{D}=2P$. In a word, we always have $\varphi_4|_C=\Phi_{|2K_C|}$, which is
not birational. So $\varphi_{4,X}$ is not birational onto its image.

Suppose we are in {\it Case B}. The right hand side of (\ref{s3})
must be $P+Q$ and $K_C\sim P+Q$. We also know that
 $\tilde{D}=P+Q$. Thus
$\varphi_4|_C=\Phi_{|2K_C|}$ is not birational either.
\end{proof}
We have proved Proposition \ref{anti}. 
\end{proof}

So far, we have actually proved the following result:

\begin{thm}\label{d=2} Let $X$ be a minimal projective 3-fold of general type with $p_g(X)=4$. Keep the same notation as in \ref{setup}. Assume $d_1=2$. Then $\varphi_{4,X}$ is not birational if and only if $g(C)=2$ and one of the following holds:
\begin{itemize}
\item[i.] $(\pi^*(K_X)\cdot C)=1$;
\item[ii.] $(\pi^*(K_X)\cdot C)=\frac{6}{5}$ and $\tilde{g}(X')$ is a cubic surface in $\bP^3$.
\item[iii.] $(\pi^*(K_X)\cdot C)=\frac{4}{3}$, $\tilde{g}(X')$ is the quadric cone $\bar{\mathbb F}_2$ in $\bP^3$ and $\hatF$ is $C$-horizontally integral, where $\hatF$ on $X'$ is the general irreducible component of the $\tilde{g}^{-1}(l)$ and $l$ is the line in the ruling of $\bar{\mathbb F}_2$ passing through the vertex.
\end{itemize}
\end{thm}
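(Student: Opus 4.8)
The plan is to assemble Theorem~\ref{d=2} from the partial results already proved in Part two, organizing the argument by a case division on the rational number $\beta$, which always satisfies $\beta\geq 2$ by Inequality~(\ref{e4.1}).

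I would first treat the forward implication. Assume $\varphi_{4,X}$ is not birational. Since $d_1=2$, the generic irreducible element $C$ of $|G|=|M_1|_S|$ is a smooth curve of genus $\geq 2$, and Lemma~\ref{g>2} forces $g(C)=2$. Now split on $\beta$. If $\beta\geq 3$, Lemma~\ref{16/5} leaves only the possibilities $\xi:=(\pi^*(K_X)\cdot C)=1$ (this is item~i) or $\xi=\frac{6}{5}$ with $\deg\tilde{g}(X')=3$; in the latter case the canonical image lies in $\bP^{p_g-1}=\bP^3$ and is a non-degenerate irreducible surface of degree $3$, hence a cubic surface, which is item~ii. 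If instead $\beta=2$, Proposition~\ref{g2} leaves only $\xi=1$ (item~i) or $\xi=\frac{4}{3}$ with $\tilde{g}(X')=\bar{\mathbb F}_2$ and $\hatF$ being $C$-horizontally integral; here one checks that the general fiber $\hatF$ of $f_0$ appearing in Proposition~\ref{g2} is precisely the general irreducible component of $\tilde{g}^{-1}(l)$ for a general line $l$ through the vertex, since the fibers of $\theta_0:\bF_2\rw\bP^1$ map via $\nu$ onto exactly such lines. This yields item~iii and exhausts the forward direction.

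For the converse, suppose $g(C)=2$ and one of items i, ii, iii holds. If item~i or item~ii holds, then Proposition~\ref{anti} (invoked with $\xi=1$ in case~i, and with $\xi=\frac{6}{5}$, $\deg\tilde{g}(X')=3$ in case~ii) shows $\varphi_{4,X}$ is not birational, as its restriction to a general $C$ is $\Phi_{|2K_C|}$. If item~iii holds, then $\beta=2$: indeed $\beta\geq 2$ always, while $\beta\geq 3$ would force $\xi\in\{1,\frac{6}{5}\}$ by Lemma~\ref{16/5}, contradicting $\xi=\frac{4}{3}$; hence all the hypotheses of Proposition~\ref{2anti} are met, and it shows $\varphi_{4,X}$ is generically finite of degree $2$, in particular not birational.

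I do not expect a genuine obstacle here: every case has been closed by a lemma or proposition in Part two, and the single ingredient tying them together is Lemma~\ref{sep}, which guarantees that for a general $S$ the system $|4K_{X'}||_S$ distinguishes generic irreducible elements of $|G|$, so that checking the one numerical inequality $\alpha_4>2$ in Theorem~\ref{key}(2) suffices throughout. The points requiring the most care are purely bookkeeping: matching the surface and curve families of Lemma~\ref{16/5} and Propositions~\ref{g2}, \ref{2anti} with the objects named in the statement (in particular identifying $\hatF$ with a component of $\tilde{g}^{-1}(l)$ and the degree-$3$ canonical image with a cubic surface), and verifying that the sub-cases produced by the $\beta=2$ and $\beta\geq 3$ analyses are consistent with the claimed trichotomy — no overlap is asserted, and $\xi=\frac{4}{3}$ cannot occur when $\beta\geq 3$ nor $\xi=\frac{6}{5}$ when $\beta=2$.
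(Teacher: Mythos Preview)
Your proposal is correct and matches the paper's approach: the paper itself offers no separate proof of this theorem, writing only ``So far, we have actually proved the following result,'' and you have simply made explicit how Lemmas~\ref{sep}--\ref{Y} and Propositions~\ref{g2}, \ref{2anti}, \ref{anti} assemble into the two implications.

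One small point to tighten: in your converse for item~iii you deduce $\beta=2$ by invoking Lemma~\ref{16/5}, but that lemma has ``$\varphi_{4,X}$ not birational'' among its hypotheses, which is exactly what you are trying to prove, so the argument is circular as written. The repair is easy: the hypothesis $\tilde{g}(X')=\bar{\mathbb F}_2$ already places you in Case~(II) of Proposition~\ref{g2}, and the proof of Proposition~\ref{2anti} uses only the $\bar{\mathbb F}_2$ structure, $\xi=\frac{4}{3}$, and the horizontal integrality of $\hatF$---the hypothesis $\beta=2$ plays no role in that proof and can be dropped (or recovered a posteriori). Also note that your dichotomy $\beta=2$ versus $\beta\geq 3$ is exhaustive because in Part~two the value of $\beta$ is taken from the explicit decomposition $\pi^*(K_X)|_S\equiv (\deg s\cdot\deg\Sigma)C+E_1'|_S$, so $\beta=\deg(s)\deg\Sigma$ is an integer~$\geq 2$; this is worth saying explicitly.
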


\subsection{\bf Part III. $d_1=3$}\hfill


We provide a concise proof for the following theorem to make this paper as self-contained as possible, though relevant statements have been partially presented in another preprint of the first author.

\begin{thm}\label{d=3} Let $X$ be a minimal projective 3-fold of general type. Assume $p_g(X)=4$ and $\varphi_1$ is generically finite. Then $\varphi_{4,X}$ is not birational if and only if $K_X^3=2$ and $\deg(\varphi_{1,X})=2$.
\end{thm}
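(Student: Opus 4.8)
The plan is to study $\varphi_{4,X}$ on a general hyperplane section and reduce, via Theorem \ref{key}, to a purely numerical inequality, exploiting the single extra hypothesis (that $\varphi_1$ is generically finite) to pin down all the relevant invariants.

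First I set up notation for $d_1=3$. In the Stein factorization $X'\overset{f}\rightarrow\Gamma\overset{s}\rightarrow\bP^3$ the morphism $f$ is birational and $s$ is finite, so $\deg\varphi_{1,X}=\deg s=M_1^3=:\delta$ and $M_1=\tilde{g}^*\OO_{\bP^3}(1)$. Writing $\pi^*(K_X)\sim_{\bQ}M_1+E_1'$ and using that $M_1$ and $\pi^*(K_X)$ are nef while $E_1'\geq 0$, every term in the expansion of $\pi^*(K_X)^3$ is nonnegative, so $K_X^3\geq M_1^3=\delta$; by Theorem \ref{v} also $K_X^3\geq 2$, and since $X$ is of general type while $\bP^3$ is rational one has $\delta\geq 2$. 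For a general $S\in|M_1|$ put $|G|=|M_1|_S|$ and let $C$ be a general member; then $C$ dominates a general line of $\bP^3$, $(M_1\cdot C)=(M_1|_S)^2=\delta$, so $\xi=(\pi^*(K_X)\cdot C)\geq\delta\geq 2$, $p=1$, and $\beta\geq 1$ because $\pi^*(K_X)|_S-C\sim_{\bQ}E_1'|_S\geq 0$. Since $|4K_{X'}|\lsgeq|4M_1|$, the system $|4K_{X'}|$ distinguishes general members of $|M_1|$ and $|4K_{X'}||_S$ distinguishes general members of $|G|$; hence by Theorem \ref{key}(2), $\varphi_{4,X}$ is birational as soon as $\alpha_4=(2-\tfrac1\beta)\xi>2$.

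This settles half of the statement. If $\delta\geq 3$, or more generally if $\xi>2$, then $\alpha_4\geq\xi>2$ and $\varphi_{4,X}$ is birational; so if $\varphi_{4,X}$ is not birational then $\delta=\xi=2$ and $\beta=1$, in particular $\deg\varphi_{1,X}=2$ and $M_1^2\cdot E_1'=\xi-\delta=0$. Now $E_1'|_S$ is contracted by the generically $2$-to-$1$ morphism $S\to\bP^2$, so $(E_1'|_S)^2\leq 0$ by the Hodge index theorem; combined with $(\pi^*(K_X)|_S)\cdot(E_1'|_S)\geq 0$ this forces $(E_1'|_S)^2=0$ and $(\pi^*(K_X)|_S)^2=C^2=2$, and then $E_1'|_S$, being an effective divisor contracted by the birational morphism $S\to S_{\mathrm{can}}$ defined by $|m\pi^*(K_X)|_S|$, must vanish. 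Thus $\pi^*(K_X)|_S\sim C$, and feeding this back into $K_X^3=\pi^*(K_X)^3$ collapses the remaining term, giving $K_X^3=(\pi^*(K_X)|_S)^2=2$. For the converse, if $K_X^3=2$ and $\deg\varphi_{1,X}=2$ then $K_X^3=M_1^3$ makes every term in the expansion of $\pi^*(K_X)^3$ vanish, so again $M_1^2\cdot E_1'=0$, $E_1'|_S=0$, $\pi^*(K_X)|_S\sim C$, $C^2=2$, and by adjunction $g(C)=4$ with $C$ hyperelliptic, the $g^1_2$ being cut out by $M_1|_C$.

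It then remains, in the case $\deg\varphi_{1,X}=2$, $K_X^3=2$, to see that $\varphi_{4,X}|_C$ is not birational. As in the proof of Lemma \ref{deg2}, Kawamata--Viehweg vanishing yields $M_4|_C\geq K_C+\tilde{D}$ with $\tilde{D}\sim M_1|_C$; since $K_C\sim 3(M_1|_C)$ on the genus--$4$ hyperelliptic curve $C$, this gives $M_4|_C\geq 4(M_1|_C)$, while $M_4|_C\leq 4\pi^*(K_X)|_C=4(M_1|_C)$, so $M_4|_C=4(M_1|_C)$ and $|M_4||_C=|4(M_1|_C)|$, which is the pull-back of $|\OO_{\bP^1}(4)|$ under the degree--$2$ map $C\to\bP^1$. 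Hence $\varphi_{4,X}|_C$ factors through $C\to\bP^1$, and (since $|4K_{X'}|$ distinguishes general $S$ and $|4K_{X'}||_S$ distinguishes general $C$) $\varphi_{4,X}$ is not birational. The step I expect to be the real obstacle is extracting the exact equality $K_X^3=2$ in the ``only if'' direction: once $E_1'|_S=0$ is known, $E_1'$ is contracted by the birational morphism $f$, and one must verify that this forces $\pi^*(K_X)^2\cdot E_1'=0$, i.e. that the fixed part of $|K_X|$ makes no contribution to $K_X^3$ in this extremal configuration; this needs a careful look at how $\pi^*(K_X)$ restricts to the $f$-exceptional divisors. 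Everything else is a routine combination of Theorem \ref{key}, the Hodge index theorem on $S$, and vanishing on $C$.
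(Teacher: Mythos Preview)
Your overall approach matches the paper's: reduce to $\alpha_4=\xi$ with $p=\beta=1$, show that $\xi>2$ forces $\varphi_4$ birational, and in the boundary case $\xi=2$ identify $\varphi_4|_C$ with the hyperelliptic double cover. The one genuine gap is exactly where you flag it: deducing $K_X^3=2$ from non-birationality.

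Your proposed route there---showing $E_1'$ is $f$-exceptional and then arguing $(\pi^*K_X)^2\cdot E_1'=0$---does not work as stated: a divisor $D$ contracted to a point by $\tilde g$ satisfies $M_1|_D\equiv 0$, but nothing forces $\pi^*(K_X)|_D$ to be numerically trivial, so $(\pi^*K_X|_D)^2$ can well be positive. The paper (Lemma~\ref{1-1}) sidesteps this by applying the Hodge index theorem on a general member $S_{[n]}\in|n\pi^*(K_X)|$ rather than on $S$. In fact your own computations make this immediate: you have already established $(\pi^*K_X)^2\cdot S=(\pi^*K_X|_S)^2=2$ and $\xi=\pi^*K_X\cdot S^2=2$, so on $S_{[n]}$ the classes $A=\pi^*(K_X)|_{S_{[n]}}$ and $B=S|_{S_{[n]}}$ satisfy $A\cdot B=2n$, $A^2=nK_X^3$, $B^2=2n$, and Hodge index $(A\cdot B)^2\ge A^2\,B^2$ gives $4\ge 2K_X^3$. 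Combined with $K_X^3\ge 2$ (Theorem~\ref{v}) this closes the gap in one line.

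One minor imprecision: ``by adjunction $g(C)=4$'' overshoots. Adjunction only gives $g(C)\ge 4$, since $K_S=2C+E_\pi|_S$ with $E_\pi|_S\cdot C\ge 0$; the equality $g(C)=4$ is forced only by the subsequent degree comparison $8=\deg(4\pi^*K_X|_C)\ge\deg(M_4|_C)\ge\deg(K_C+M_1|_C)=2g(C)$, as in the paper's Lemma~\ref{1-2}. Also, in your converse direction the hypothesis $\deg\varphi_1=2$ is redundant: once $K_X^3=2$, the chain $2=K_X^3\ge S^3\ge\deg\varphi_1\ge 2$ already yields it.
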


Keep the same setting and notation as in \ref{setup}. Pick a general member $S\in |M_1|$. Consider the linear system
$|4K_{X'}|$ and its sub-system $$|K_{X'}+\roundup{2\pi^*(K_X)}+M_1|.$$ Clearly $\varphi_4$ distinguishes different general members of $|M_1|$.  By the Kawamata-Viehweg vanishing theorem, we have the following relation:
\begin{equation}\label{e1}
|K_{X'}+\roundup{2\pi^*(K_X)}+M_1||_S=
|K_S+\roundup{2\pi^*(K_X)}|_S|\lsgeq |K_S+\roundup{2L}|
\end{equation}
where $L=\pi^*(K_X)|_S$ is an effective nef and big $\bQ$-divisor on $S$.
Set $|G|=|M_1|_S|$. Pick a generic irreducible element $C$ of $|G|$. Then, since $p_g(S)>0$, $|K_S+\roundup{2L}|$ distinguishes different general curves $C$. Thus it is sufficient to prove the birationality (or non-birationality) of $\varphi_{4}|_C$.
In fact, the Kawamata-Viehweg vanishing theorem gives
$$|K_F+\roundup{2L-{E_1'}|_F}||_C=|K_C+D_3|$$
where $D_3=\roundup{2L-{E_1'}|_F-C}|_C$ with $\deg(D_3)\geq (L\cdot C)=\xi$.

\begin{lem}\label{1-1} $K_X^3>2$ if and only if $\xi>2$.
\end{lem}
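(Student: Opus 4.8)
The idea is to compare $K_X^3=\pi^*(K_X)^3$ with $\xi$ by means of the relation $\pi^*(K_X)\sim_{\bQ}M_1+E_1'$, keeping in mind that for $d_1=3$ we have $p=1$, that $\tilde{g}\colon X'\to\bP^3$ is a generically finite \emph{morphism} with $M_1=\tilde{g}^*\OO_{\bP^3}(1)$, and hence that $M_1^3=\deg\tilde{g}$ and $C^2=(M_1|_S)^2=M_1^3$.

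The implication ``$\xi>2\Rightarrow K_X^3>2$'' is the easy half: since $\pi^*(K_X)\equiv S+E_1'$ with $S\in|M_1|$ a generic irreducible element and $E_1'\geq 0$, nefness of $\pi^*(K_X)$ gives $K_X^3\geq\pi^*(K_X)^2\cdot S=(\pi^*(K_X)|_S)^2$; on $S$ one has $\pi^*(K_X)|_S\sim_{\bQ}C+E_1'|_S\geq C$, so $(\pi^*(K_X)|_S)^2\geq\pi^*(K_X)|_S\cdot C=\xi$. Thus $K_X^3\geq\xi$ always, which settles this direction.

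For the converse I would argue by contradiction, assuming $K_X^3>2$ and $\xi\leq 2$. First, $\tilde{g}$ cannot be birational ($X$ is of general type and $\bP^3$ is not), so $M_1^3=\deg\tilde{g}\geq 2$; since $\xi=\pi^*(K_X)\cdot M_1^2=M_1^3+(E_1'\cdot M_1^2)$ with $E_1'\cdot M_1^2\geq 0$, the hypothesis forces $\xi=2$, $M_1^3=2$ and $(E_1'|_S\cdot C)=0$ on $S$. As $C^2=M_1^3=2>0$, the Hodge index theorem gives $(E_1'|_S)^2\leq 0$, whereas the identity $(E_1'|_S)^2=\pi^*(K_X)\cdot M_1\cdot E_1'-M_1^2\cdot E_1'=\pi^*(K_X)\cdot M_1\cdot E_1'\geq 0$ forces $(E_1'|_S)^2=0$; by the equality case of Hodge index, $E_1'|_S=0$. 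Hence $E_1'$ is disjoint from the general member of the base point free system $|M_1|=\tilde{g}^*|\OO_{\bP^3}(1)|$, so $\tilde{g}$ contracts every component of $E_1'$ to a point.

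To conclude, pass to the Stein factorization $\tilde{g}\colon X'\overset{\rho}\longrightarrow W\overset{h}\longrightarrow\bP^3$, where $W$ is normal, $h$ is finite of degree $\deg\tilde{g}=2$, and $\rho$ is birational. Since $h$ is finite it contracts no divisor, so each component of $E_1'$ is contracted by $\rho$, i.e.\ $E_1'$ is $\rho$-exceptional. With $L_W:=h^*\OO_{\bP^3}(1)$, which is ample on $W$ and satisfies $L_W^3=\deg h=2$, we have $M_1=\rho^*L_W$, hence $\pi^*(K_X)\sim_{\bQ}\rho^*L_W+E_1'$ with $E_1'\geq 0$ and $\rho$-exceptional. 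Using $\rho_*\OO_{X'}(\rounddown{mE_1'})=\OO_W$ for all $m\geq 0$, one gets $\Vol(\pi^*(K_X))=\Vol(L_W)=L_W^3=2$; as $\pi^*(K_X)$ is nef and big this says $K_X^3=2$, contradicting $K_X^3>2$. The routine inputs are the intersection-number identities (immediate from $\pi^*(K_X)\sim_{\bQ}M_1+E_1'$ and $S\sim M_1$) together with the Hodge index theorem; \textbf{the crux} is the last step, where one must recognize that $\xi=2$ collapses $\pi^*(K_X)$ to the pullback of an ample class on the degree-$2$ cover $W$ up to a $\rho$-exceptional divisor, so that $K_X^3$ coincides with the birationally invariant volume $L_W^3=2$.
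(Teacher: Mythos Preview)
Your proof is correct and follows a different route from the paper's.

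Both arguments share the easy half: from $\pi^*(K_X)\sim_{\bQ}S+E_1'$ with $E_1'\geq 0$ and nefness of $\pi^*(K_X)$ one gets $K_X^3\geq\xi$, and $\xi\geq C^2=M_1^3=\deg\tilde{g}\geq 2$. For the converse, the paper stays purely numerical: it passes to a general member $S_{[n]}\in|n\pi^*(K_X)|$ for $n\gg 0$ divisible and applies the Hodge index theorem there to the pair $(\pi^*(K_X)|_{S_{[n]}},\,S|_{S_{[n]}})$, aiming at an inequality of the shape $\xi\geq\sqrt{K_X^3\cdot\xi}$ and hence $\xi\geq K_X^3$; combined with $K_X^3\geq\xi$ this forces $K_X^3=\xi$, so $K_X^3=2\Leftrightarrow\xi=2$. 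You instead work on the fixed surface $S$: assuming $\xi=2$, Hodge index for the nef class $C$ together with the identity $(E_1'|_S)^2=\pi^*(K_X)\cdot M_1\cdot E_1'\geq 0$ forces $E_1'|_S\equiv 0$, hence $E_1'|_S=0$ by effectivity; so every component of $E_1'$ is contracted by $\tilde{g}$ and is therefore $\rho$-exceptional for the Stein factorization $X'\overset{\rho}\to W\overset{h}\to\bP^3$. You then conclude $K_X^3=\Vol(\pi^*(K_X))=\Vol(L_W)=L_W^3=2$ via $\rho_*\OO_{X'}(mE_1')=\OO_W$ (normality of $W$) and ampleness of $L_W=h^*\OO_{\bP^3}(1)$. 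The paper's route is shorter and needs no auxiliary construction; yours is more geometric and makes explicit why $K_X^3=2$ corresponds to $\varphi_1$ being a degree-$2$ cover, anticipating the next lemma. Your volume computation also neatly sidesteps the term $(E_1')^3$, which a naive expansion of $(\rho^*L_W+E_1')^3$ would leave uncontrolled.
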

\begin{proof} Pick a general member $S\in |M_1|$. We have
$$\pi^*(K_X)|_S\sim S|_S+E_1'|_S$$ and so
$$K_X^3=(\pi^*(K_X))^3\geq (\pi^*(K_X)^2\cdot S)=\xi.$$
On $S$, since $|C|$ is not composed of a pencil of curves, $C^2\geq 2$. Thus $\xi=(\pi^*(K_X)\cdot S^2)\geq C^2\geq 2$.

On the other hand, by choosing a sufficiently large and divisible integer $n>0$ so that $|n\pi^*(K_X)|$ is base point free, one applies the Hodge Index Theorem on the general member $S_{[n]}$ to get the inequality:
$$\xi=(\pi^*(K_X)\cdot S^2)=\frac{1}{n}(\pi^*(K_X)|_{S_{[n]}}\cdot S|_{S_{[n]}})\geq \sqrt{K_X^3\cdot \xi}.$$
By Theorem \ref{v}, one has $K_X^3\geq 2$. Thus it follows that
$\xi=2$ if and only $K_X^3=2$. The lemma is proved.
\end{proof}

\begin{lem}\label{1-2} $\varphi_{4,X}$ is generically finite of degree $\leq 2$; $\varphi_{4,X}$ is birational if and only if $K_X^3>2$.
\end{lem}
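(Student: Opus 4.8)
The plan is to use the reduction established just before the statement of Theorem~\ref{d=3}: $\varphi_{4,X}$ is birational (resp.\ not birational) if and only if $\varphi_4|_C$ is, for a generic irreducible element $C$ of $|G|=|M_1|_S|$ on a general $S\in|M_1|$, and then to split into two cases according to whether $\xi>2$ or $\xi=2$, which by Lemma~\ref{1-1} is the dichotomy $K_X^3>2$ versus $K_X^3=2$ (the value $K_X^3<2$ being excluded by Theorem~\ref{v}). That $\varphi_{4,X}$ is generically finite onto a $3$-fold is immediate, since $|4K_{X'}|\lsgeq|M_1|$ and $\Phi_{|M_1|}=\tilde g$ is already generically finite onto $\bP^3$; that its degree is $\le 2$ will come out of the case analysis (degree $1$ when $K_X^3>2$, degree $2$ when $K_X^3=2$).

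If $K_X^3>2$, then $\xi>2$. Since $p=1$ in the present case, and $\pi^{*}(K_X)|_S\geq M_1|_S\geq C$ forces $\beta\geq 1$, we get
$$\alpha_4=\Bigl(2-\tfrac{1}{\beta}\Bigr)\xi\geq\xi>2.$$
As $|4K_{X'}||_S\lsgeq|K_S+\roundup{2L}|$ distinguishes different generic irreducible elements of $|G|$ (because $p_g(S)>0$), Theorem~\ref{key}(2) shows that $\varphi_{4,X}$ is birational.

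The decisive case is $K_X^3=2$, i.e.\ $\xi=2$. The key observation is that this borderline value forces all the inequalities in sight to be equalities. First, $M_1^3=(\tilde g^{*}\OO_{\bP^3}(1))^3=\deg\varphi_{1,X}$, and $K_X^3=(\pi^{*}K_X)^3\geq(\pi^{*}K_X)^2\cdot M_1\geq(\pi^{*}K_X)\cdot M_1^2\geq M_1^3$; since $X$ is of general type (hence not rational) $\varphi_{1,X}$ is not birational, so $2=K_X^3=M_1^3=\deg\varphi_{1,X}$, all intermediate terms equal $2$, and in particular $(E_1'\cdot C)=0$ and $C^2=(M_1|_S)^2=2$. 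Restricting $\varphi_{1,X}|_S\colon S\to\bP^2$ to $C$ (the preimage of a general line $\ell$) exhibits $C$ as a double cover of $\ell\cong\bP^1$, so $C$ is hyperelliptic with $g^{1}_{2}=|(M_1|_S)|_C|$. On the other hand, adjunction on $S$ gives
$$2g(C)-2=(K_S+C)\cdot C\geq\bigl(\pi^{*}(K_X)|_S+2C\bigr)\cdot C=\xi+2C^2=6,$$
so $g(C)\geq4$; combining with $(M_4\cdot C)\leq(4\pi^{*}(K_X)\cdot C)=8$ and the chain $(M_4\cdot C)\geq\deg(K_C+D_3)\geq 2g(C)-2+\deg D_3$, every inequality collapses, so $g(C)=4$, $\deg D_3=2$ and $\varphi_4|_C=\Phi_{|K_C+D_3|}$. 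Finally, using $(E_1'\cdot C)=0$ one identifies $D_3\sim(M_1|_S)|_C$, a member of the hyperelliptic $g^{1}_{2}$, whence $K_C+D_3\sim K_C+g^{1}_{2}\sim 4g^{1}_{2}$ and $\Phi_{|K_C+D_3|}$ factors through the degree-$2$ map $C\to\bP^1$. Thus $\varphi_4|_C$, and therefore $\varphi_{4,X}$, is generically $2$-to-$1$, in particular not birational. Combining the three cases proves both assertions.

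The main obstacle is precisely the borderline case $K_X^3=2$: one must squeeze all the numerical estimates to equalities in order to pin down $g(C)=4$, $(E_1'\cdot C)=0$ and, most importantly, the \emph{linear equivalence class} of $D_3$. It is not enough that $\deg D_3=2$; one needs $D_3$ linearly equivalent to the hyperelliptic pencil $g^{1}_{2}$, since for a generic degree-$2$ divisor $D_3$ the system $|K_C+D_3|$ would in fact give a birational map. Some care about a possible fixed part of $|M_1|_S|$ is also required, both in the genus and self-intersection bounds and in the identification of $D_3$.
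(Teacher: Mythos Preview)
Your proof is correct and follows essentially the same route as the paper's: reduce to $\varphi_4|_C$, split into $\xi>2$ versus $\xi=2$ via Lemma~\ref{1-1}, invoke Theorem~\ref{key}(2) when $\xi>2$, and in the borderline case show $\deg\varphi_1=2$, $C$ hyperelliptic with $g^1_2=(M_1|_S)|_C$, squeeze to get $(M_4\cdot C)=8$, and conclude that $\varphi_4|_C$ factors through the hyperelliptic double cover.

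The one place where your argument diverges from the paper is the identification of the restricted linear system on $C$. You work with $D_3=\roundup{2L-E_1'|_S-C}|_C$ and invoke $(E_1'\cdot C)=0$ to argue $D_3\sim (M_1|_S)|_C$. This step is correct but is not quite as immediate as you present it: from $(E_1'|_S\cdot C)=0$ together with $(E_1'|_S)^2=0$ and the Hodge index theorem (since $L$ is nef and big) one gets $E_1'|_S\equiv 0$, hence $E_1'|_S=0$ as an effective $\bQ$-divisor; only then does the identification go through cleanly. The paper sidesteps this entirely by using the integral subsystem $|K_S+2S|_S|\lsleq |K_S+\roundup{2L}|$ and applying vanishing to obtain $|K_S+2S|_S||_C=|K_C+S|_C|$ directly, where $S|_C$ is visibly the $g^1_2$. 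This avoids any discussion of $E_1'|_S$ or round-ups and is a little cleaner, though your route also works once the missing Hodge-index step is filled in. (Alternatively, once you know $|M_4||_C\lsgeq|K_C+S|_C|$ from the paper's chain and both sides have degree $8$, equality of the two complete linear systems forces $D_3\sim S|_C$.)

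A cosmetic point: the paper only records $g(C)\ge 4$ and does not need the exact value, since for any hyperelliptic $C$ one has $K_C\sim (g-1)g^1_2$, so $|K_C+g^1_2|=|g\cdot g^1_2|$ is always a double cover regardless of $g$. Your explicit determination $g(C)=4$ is correct but not strictly necessary.
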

\begin{proof} By definition, we have $p=1$ and $\beta=1$. Then $\alpha_4=\xi$.

Assume $K_X^3>2$. Lemma \ref{1-1} implies $\xi>2$ and Theorem \ref{key} (2) implies the birationality of $\varphi_4$.

Assume $K_X^3=2$. Note that $g:X'\lrw \bP^3$ cannot be birational. We have
\begin{equation}\label{e2}
2=K_X^3\geq S^3\geq \deg(\varphi_1)\geq 2,
\end{equation}
it follows that $\varphi_{1,X}$ is generically finite of degree $2$.  This means $\varphi_1|_C$ is a double cover onto $\bP^1$. In particular, $C$ is hyperelliptic and $S|_C$ is exactly a $g_2^1$ of $C$. Note that $C$ is a curve of genus $\geq 4$ since $K_SC+C^2\geq 6$. We have
$$|K_F+2L||_C\lsgeq |K_F+2S|_S||_C=|K_C+S|_C|$$ by the vanishing theorem. This, together with the relation (\ref{e1}), implies $|M_4||_C\lsgeq |K_C+S|_C|$, where the last one is base point free with $\deg(K_C+S|_C)\geq 8$. Since $(4\pi^*(K_X)\cdot C)=4\xi=8$, we see $|M_4||_C=|K_C+S|_C|$, which gives exactly a double cover.  Clearly, since $|M_4|$ distinguishes different curves $C$,
$\varphi_4$ is generically a double cover. We are done.
\end{proof}

Theorem \ref{d=3} automatically follows from Lemma \ref{1-2}.

\bigskip
\begin{proof}[{\bf Proof of Theorem \ref{m1}}] Assume that $\varphi_4$ is not birational.  Then $X$ has the listed $4$ structures by Corollary \ref{d1}, Theorem \ref{d=2} and Theorem \ref{d=3}.

Contrarily, if $X$ has structures (1), (3) and (4), then $\varphi_{4,X}$ is not birational by Theorem \ref{d=3}, Theorem \ref{d=2}(ii) and Theorem \ref{d=2}(iii).  Assume $X$ has structure (2).  We take the birational modification $\pi:X'\rw X$  and keep the same notation as in \ref{setup}. Then automatically $d_1\leq 2$ since, otherwise, $$(K_X\cdot C_0)=(\pi^*(K_X)\cdot \hat{C})\geq 2$$ where we assume $\pi(\hat{C})=C_0$ and $\hat{C}$ is a moving curve on $X'$.

We consider the case $d_1=2$.  Note that we have another curve family $\mathfrak{C}'$ which is induced from $\varphi_{1,X}$.  Pick a general fiber $C$ of the induced fibration $f:X'\lrw \Gamma$. Suppose $\mathfrak{C}'\neq \mathfrak{C}$. Then, for a general curve
$\hat{C}$ such that $\pi(\hat{C})=C_0\in \mathfrak{C}$, we have that $f(\hat{C})$ is a curve. Then $$(\pi^*(K_X)\cdot \hat{C})\geq (M_1\cdot \hat{C})\geq 2$$ since $g(\hat{C})\geq 2$, a contradiction. Thus $\mathfrak{C}'=\mathfrak{C}$ and, in fact, $\mathfrak{C}$ is the canonical curve family. Thus $\varphi_4$ is not birational by Theorem \ref{d=2} (i).

{}Finally let us consider the case $d_1=1$.  We have an induced fibration $f:X'\lrw \Gamma$ whose general fiber is $F$.  By \cite[Lemma 4.7]{MZ} and Corollary \ref{ie}, we have $\pi^*(K_X)|_F\geq \frac{3}{4}\sigma^*(K_{F_0})$.  Still consider the curve $\hat{C}$ on $X'$ with $\pi(\hat{C})=C_0$. If $\hat{C}$ is not vertical with respect to $f$, then $$\Phi_{|M_1|}(\hat{C})=\Gamma.$$  In particular, we have $(F\cdot \hat{C})\geq 1.$ Then $$(\pi^*(K_X)\cdot \hat{C})\geq p(F\cdot \hat{C})\geq 3,$$ a contradiction.  Therefore we see $\hat{C}\subset F$ for some smooth fiber $F$ if we choose a general curve $\hat{C}$. But then
$$1=(\pi^*(K_X)\cdot \hat{C})=(\pi^*(K_X)|_F\cdot \hat{C})\geq \frac{3}{4}(\sigma^*(K_{F_0})\cdot \hat{C})$$
implies that  $(\sigma^*(K_{F_0})\cdot \hat{C})=1$.  Since $C$ is a smooth genus 2 curve, we have $K_{F_0}^2=1$ by the Hodge Index Theorem. Besides, $|C|$ must be a rational pencil on $F$ and $K_F\geq C$. All these clearly imply that  $F$ is a $(1,2)$ surface. Therefore $\varphi_{4,X}$ is not birational by Corollary \ref{d1}.
\end{proof}

{}Finally we would like to ask the following very interesting, but challenging question:

\begin{op} (1) Is it possible to characterize the birationality of $\varphi_m$ ($m=4$, $5$) for minimal projective 3-folds $X$ of general type with $p_g=3$?

(2) Is it possible to characterize the birationality of $\varphi_m$ ($m=4$, $5$, $6$) for  minimal projective 3-folds $X$ of general type with $p_g=2$?
\end{op}

\noindent{\bf Acknowledgment}. The authors would like to thank the referee for very constructive suggestions which greatly help them improve the readability of this paper. This article was started while Chen was visiting Universit$\ddot{\text{a}}$t Bayreuth in February of 2012. Chen would like to thank Ingrid Bauer and Fabrizio Catanese for their hospitality and their generous support. Chen feels indebted to Fabrizio Catanese for many stimulating discussions. Zhang would like to thank LMNS of Fudan University for the visiting fellow support in 2012 and 2013.

\end{document}